\newtheorem{theorem}{Theorem}[section]
\newtheorem{lemma}[theorem]{Lemma}
\newtheorem{corollary}[theorem]{Corollary}
\newtheorem{proposition}[theorem]{Proposition}
\theoremstyle{definition}
\newcommand{\defn}[1]{{\em #1}}
\theoremstyle{remark}
\title{Strongly regular decompositions and symmetric association schemes of a power of two}
\date{\today}
\author{
 Hadi Kharaghani\thanks{Department of Mathematics and Computer Science, University of Lethbridge,
Lethbridge, Alberta, T1K 3M4, Canada.  \texttt{kharaghani@uleth.ca}} 
\and  
 Sara Sasani\thanks{Department of Mathematics and Computer Science, University of Lethbridge,
Lethbridge, Alberta, T1K 3M4, Canada.  \texttt{sasani@uleth.ca}} 
\and
 Sho Suda\thanks{Department of Mathematics Education,  Aichi University of Education, 1 Hirosawa, Igaya-cho, Kariya, Aichi 448-8542, Japan. \texttt{suda@auecc.aichi-edu.ac.jp}}}
\begin{document}
\maketitle
\abstract{
For any positive integer $m$, the complete graph on $2^{2m}(2^m+2)$ vertices is decomposed into $2^m+1$ commuting strongly regular graphs, which give rise to a symmetric association scheme of class $2^{m+2}-2$.
Furthermore, the eigenmatrices of the symmetric association schemes are determined explicitly. 
As an application, the eigenmatrix of the commutative strongly regular decomposition obtained from the strongly regular graphs is derived.   
}

\section{Introduction}
A \defn{strongly regular graph with parameters $(v,k,\lambda,\mu)$} is a regular graph with $v$ vertices of degree $k$ such that every two adjacent vertices have exactly $\lambda$ common neighbors and  every two non-adjacent vertices have exactly $\mu$ common neighbors. 
A \defn{strongly regular decomposition} is a decomposition of the edge set of the complete graph with vertex set $V$ into strongly regular graphs with vertex set $V$. 
A strongly regular decomposition is \defn{commutative} if the adjacency matrices of strongly regular graphs are commutative. 
The concept of strongly regular decompositions was introduced by van Dam \cite{D} in order to study more general situation of amorphous association schemes.

In this paper, we show that for any positive integer $m$, there is a commutative strongly regular decomposition of the complete graph of $2^{2m}(2^m+2)$ vertices, into $2^m$ strongly regular graphs with parameters $(2^{2m}(2^m+2),2^{2m}+2^m,2^m,2^m)$ and $2^m+2$ cliques of size $2^{2m}$. 
Note that  $2^m+2$ cliques of size $2^{2m}$ is a strongly regular graph with parameters $(2^{2m}(2^m+2),2^{2m}-1,2^{2m}-2,0)$. 
In fact, the constructed strongly regular graphs with parameters $(2^{2m}(2^m+2),2^{2m}+2^m,2^m,2^m)$ are symmetric $(2^{2m}(2^m+2),2^{2m}+2^m,2^m)$-designs with symmetric incidence matrices with very large number of symmetries.  
Our construction method is based on the generalized Hadamard matrices obtained from finite fields of characteristic two and symmetric Latin squares with constant diagonal of even order. 

One might wonder if the decomposition yields a symmetric association scheme, but unfortunately this does not hold. 
We had to further decompose the edge sets of the strongly regular graphs to obtain a symmetric association scheme, and determine the eigenamtrices of the symmetric association scheme explicitly. 
As a corollary, we obtain the eigenmatrix of the commutative strongly regular decomposition.

\section{Preliminaries}
Let $n$ be a positive integer. 
Let $V$ be a finite set of size $v$ and $R_i$ ($i\in\{0,1,\ldots,n\}$) be a non-empty subset of $V\times V$. 
The \emph{adjacency matrix} $A_i$ of the graph with vertex set $V$ and edge set $R_i$ is a $v\times v$ $(0,1)$-matrix with rows and columns indexed by the elements of $V$ such that $(A_i)_{xy}=1$ if $(x,y)\in R_i$ and $(A_i)_{xy}=0$ otherwise. 
The pair $(V,\{R_i\}_{i=0}^n)$ is said to be a \defn{commutative decomposition} of the complete graph if the following hold:
\begin{enumerate}
\item $A_0=I_v$, the identity matrix of order $v$.
\item $\sum_{i=0}^n A_i = J_v$, the all-ones matrix of order $v$.
\item $A_i$ is symmetric for $i\in\{1,\ldots,n\}$.
\item For any $i,j$, $A_i A_j=A_j A_i$. 
\end{enumerate}
We also refer to the set of non-zero $v\times v$ $(0,1)$-matrices satisfying (i)-(iv) as a commutative decomposition. 
Note that the corresponding graph of each $A_i$ is regular, because $A_i$ and $J_v=\sum_{i=0}^n A_i$ commute. 
Let $k_i$ denote the valency of the corresponding graph of $A_i$. 
A commutative decomposition $\{A_0,A_1,\ldots,A_n\}$ is a \defn{symmetric association scheme of class $n$} if there exist non-negative integers $p_{i,j}^k$ such that $A_i A_j=\sum_{k=0}^n p_{i,j}^k A_k$.
The non-negative integers $p_{i,j}^k$ are said to be the \defn{intersection numbers}. 
A commutative decomposition is a \defn{strongly regular decomposition} if the corresponding graph of each $A_i$ is strongly regular. 
Note that a $v\times v$ $(0,1)$-matrix $A$ is the adjacency matrix of a strongly regular graph if and only if $\{I_v,A,J_v-A-I_v\}$ is a symmetric association scheme of class $2$. 

We define the \defn{eigenmatrix} for commutative decompositions. 
Since the adjacency matrices are symmetric and commuting, there are maximal common eigenspaces $V_0,V_1,\ldots,V_t$ of $A_i$'s. 
Let $E_j$ be the orthogonal projection $\mathbb{R}^v$ onto $V_j$ for $j\in\{0,1,\ldots,t\}$.   
Note that the corresponding graph of each $A_i$ is regular and one of them is connected, so we may assume that $V_0$ is spanned by the all-ones vector and thus $E_0=(1/v)J_v$. 
The matrices $E_0,E_1,\ldots,E_t$ satisfy $E_iE_j=\delta_{ij}E_i$ for any $i,j$ where $\delta_{ij}$ is the Kronecker delta. 
Then we can express $A_j$ as a linear combination of $E_i$:
\begin{align*}
A_j=\sum_{i=0}^t p_{ij}E_i.
\end{align*}
We call a $(t+1)\times (n+1)$ matrix $P=(p_{ij})_{\substack{0\leq i\leq t\\0\leq j \leq n}}$ the \defn{eigenmatrix} of the commutative decomposition.
The following lemma was proved in \cite{D}. 
\begin{lemma}
Let $(V,\{R_i\}_{i=0}^n)$ be a commutative decomposition with orthogonal projections $E_j$, $j\in\{0,1,\ldots,t\}$. 
Then $t\geq n$ with equality if and only if $(V,\{R_i\}_{i=0}^n)$ is a symmetric association scheme.  
\end{lemma}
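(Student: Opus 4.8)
The plan is to compare two subspaces of the space of real $v\times v$ matrices: the span $\mathcal{A}=\langle A_0,\dots,A_n\rangle$ of the adjacency matrices and the span $\mathcal{E}=\langle E_0,\dots,E_t\rangle$ of the projections. First I would establish the two dimensions. Since $A_0=I_v$ and $\sum_{i=0}^n A_i=J_v$ with each $A_i$ a $(0,1)$-matrix, the sets $R_i$ partition $V\times V$, so the $A_i$ have pairwise disjoint supports and are linearly independent; hence $\dim\mathcal{A}=n+1$. Likewise $E_iE_j=\delta_{ij}E_i$ shows the $E_j$ are nonzero orthogonal idempotents, so they are linearly independent and $\dim\mathcal{E}=t+1$; moreover $\mathcal{E}$ is closed under multiplication, i.e.\ it is a commutative algebra with the $E_j$ as a complete system of primitive idempotents. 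The expansion $A_j=\sum_{i=0}^t p_{ij}E_i$ shows $\mathcal{A}\subseteq\mathcal{E}$, so comparing dimensions gives $n+1\le t+1$, that is $t\ge n$, with equality if and only if $\mathcal{A}=\mathcal{E}$.

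It remains to prove that $\mathcal{A}=\mathcal{E}$ holds exactly when $(V,\{R_i\}_{i=0}^n)$ is a symmetric association scheme. Suppose first that $t=n$, so $\mathcal{A}=\mathcal{E}$. Since $\mathcal{E}$ is an algebra, $\mathcal{A}$ is closed under multiplication, and for each $i,j$ there exist reals $c_k$ with $A_iA_j=\sum_{k=0}^n c_kA_k$. I would then read off $c_k$ combinatorially: for any $(x,y)\in R_k$, because the $A_k$ have disjoint supports partitioning $V\times V$, one has $c_k=(A_iA_j)_{xy}=|\{z\in V:(x,z)\in R_i,\ (z,y)\in R_j\}|$, a non-negative integer independent of the chosen representative $(x,y)$. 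Hence the $c_k=p_{i,j}^k$ are non-negative integers and the decomposition is a symmetric association scheme.

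Conversely, if $(V,\{R_i\}_{i=0}^n)$ is a symmetric association scheme, then $A_iA_j=\sum_k p_{i,j}^k A_k\in\mathcal{A}$, so $\mathcal{A}$ is itself a commutative algebra of symmetric matrices. Being commutative and semisimple, $\mathcal{A}$ admits a unique basis of primitive orthogonal idempotents, and these are the projections onto its maximal common eigenspaces. Since each generator $A_i$ acts as the scalar $p_{ij}$ on $V_j$, every element of $\mathcal{A}$ acts as a scalar on $V_j$; conversely any common eigenspace of $\mathcal{A}$ is in particular a common eigenspace of all the $A_i$. Thus the maximal common eigenspaces of $\mathcal{A}$ are precisely $V_0,\dots,V_t$, and its primitive idempotents are exactly $E_0,\dots,E_t$. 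Therefore $n+1=\dim\mathcal{A}=t+1$, giving $t=n$.

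The step I expect to be the main obstacle is the last one: making rigorous that, for a symmetric association scheme, the primitive idempotents of the Bose–Mesner algebra $\mathcal{A}$ coincide with the projections $E_j$ onto the maximal common eigenspaces, so that $\dim\mathcal{A}=t+1$. Everything else is a dimension count together with the elementary observation that matrices lying in the disjoint-support span $\mathcal{A}$ are constant on each relation $R_k$, which simultaneously delivers the well-definedness, integrality, and non-negativity of the intersection numbers.
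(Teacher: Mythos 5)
Your proof is correct. The paper itself contains no proof of this lemma --- it is quoted from van Dam \cite{D} --- but your dimension-count argument ($\mathcal{A}\subseteq\mathcal{E}$ always, with equality exactly when $\mathcal{A}$ is closed under matrix multiplication, plus the observation that closure forces the structure constants to be the non-negative integral path counts) is the standard one and is essentially what the cited proof amounts to. The one step you rightly flag as delicate, namely that in the converse direction each $E_j$ lies in the Bose--Mesner algebra $\mathcal{A}$, can be handled without appealing to the general structure theory of commutative semisimple algebras: since the $V_j$ are \emph{maximal} common eigenspaces, for each $j'\neq j$ there is an index $i=i(j')$ with $p_{ji}\neq p_{j'i}$, and the product over all $j'\neq j$ of the factors $(A_{i(j')}-p_{j'i(j')}I_v)/(p_{ji(j')}-p_{j'i(j')})$ acts as $1$ on $V_j$ and as $0$ on every other $V_{j'}$, hence equals $E_j$ and lies in $\mathcal{A}$ by closure under multiplication; this yields $\mathcal{E}\subseteq\mathcal{A}$, so $t\leq n$ and therefore $t=n$. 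With that substitution (or with your semisimplicity argument carefully spelled out, which also works since a commutative algebra of real symmetric matrices is automatically semisimple), the proof is complete.
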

For the case of symmetric association schemes, we have $t=n$ so that the eigenmatrix $P$ becomes a square matrix of order $n+1$. 
The vector space $\mathcal{A}$ spanned by the adjacency matrices over $\mathbb{R}$ is closed under the matrix multiplication and is said to be the \defn{Bose-Mesner algebra}.
Then each $E_j$ is an element in $\mathcal{A}$, so it is written as a linear combination of $A_i$ as follows: for some real numbers $q_{ij}$,   
\begin{align*}
E_j=\frac{1}{v}\sum_{i=0}^n q_{ij}A_i.
\end{align*}
The matrices $E_0,E_1,\ldots,E_n$ are said to be the \defn{primitive idempotents} of the symmetric association scheme. 
The matrix $P$ is also said to be the \defn{first} eigenmatrix and the matrix $Q=(q_{ij})_{i,j=0}^{n}$ is said to be the \defn{second eigenmatrix} of the symmetric association scheme. 
The valencies $k_j$ ($j\in\{0,1,\ldots,n\}$) satisfy $k_j=p_{0j}$, and the \defn{multiplicities} $m_j$ ($j\in\{0,1,\ldots,n\}$) are defined by $m_j=\mathrm{rank}(E_j)$ and satisfy $m_j=q_{0j}$. 
The first and second eigenmatrices are related as the following. 
\begin{lemma}\label{lem:eigen}\upshape{\cite[Theorem~3.5(i)]{BI}}
Let $(V,\{R_i\}_{i=0}^n)$ be a symmetric association scheme with first and second eigenmatrices $P$ and $Q$. 
Let $\Delta_k$ and $\Delta_m$ be $(n+1)\times (n+1)$ diagonal matrices with diagonal entries $k_0,k_1,\ldots,k_n$ and $m_0,m_1,\ldots,m_n$, respectively.  
Then it holds that $\Delta_m P=Q^\top \Delta_k$ where $Q^\top$ is the transpose of $Q$. 
\end{lemma}
\begin{proof}
Calculate the trace of $A_iE_j$ in two ways.
\end{proof}
Finally we define the Krein numbers of the symmetric association scheme. 
Since the Bose-Mesner algebra has a basis $\{A_0,A_1,\ldots,A_n\}$ consisting of disjoint $(0,1)$-matrices, $\mathcal{A}$ is closed under the entrywise product denoted $\circ$.  
Then for any $i,j,k\in\{0,1,\ldots,n\}$, there exist real numbers $q_{i,j}^k$ such that $E_i\circ E_j=\frac{1}{v}\sum_{k=0}^n q_{i,j}^k E_k$.  
The real numbers $q_{i,j}^k$ are said to be the \defn{Krein numbers}, and it is known that the Krein numbers are non-negative real numbers \cite[Theorem~3.8]{BI}.

\section{Association schemes of a power of two}
From now let $q=2^m$ be a power of two. 
We denote by $\mathbb{F}_q$ the finite field of $q$ elements.   
Let $H_q$ be the multiplicative table of $\mathbb{F}_q$, i.e., $H_q$ is a $q\times q$ matrix with rows and columns indexed by the elements of $\mathbb{F}_q$ with $(\alpha,\beta)$-entry equal to $\alpha \cdot \beta$. 
Then the matrix $H_q$ is a \defn{generalized Hadamard matrix with parameters $(q,1)$} over the additive group of $\mathbb{F}_q$. 
Letting $G$ be an additively written finite abelian  group of order $g$, 
a square matrix $H=(h_{ij})_{i,j=1}^{g\lambda}$ of order $g\lambda$ with entries from $G$ is called a {\it generalized Hadamard matrix with the parameters $(g,\lambda)$} over $G$ 
if for all distinct $i,k\in\{1,2,\ldots,g\lambda\}$, the multiset $\{h_{ij}-h_{kj}: 1\leq j\leq g\lambda\}$ contains exactly $\lambda$ times of each element of $G$. 

Let $\phi$ be a permutation representation of the additive group of $\mathbb{F}_q$ defined as follows.  
Since $q=2^m$, we view the additive group of $\mathbb{F}_q$ as $\mathbb{F}_2^m$. 
Define $R=J_2-I_2$, and a group homomorphism $\phi:\mathbb{F}_{q}\rightarrow GL_{q}(\mathbb{R})$ as $\phi((x_i)_{i=1}^m)= \otimes_{i=1}^m R^{x_i}$. 

From the generalized Hadamard matrix $H_q$ and the permutation representation $\phi$, we construct $q^2$ auxiliary matrices; 
for each $\alpha,\alpha'\in \mathbb{F}_q$, define a $q^2\times q^2$ $(0,1)$-matrix $C_{\alpha,\alpha'}$ to be 
\begin{align*}
C_{\alpha,\alpha'}=(\phi(\alpha(-\beta+\beta')+\alpha'))_{\beta,\beta'\in\mathbb{F}_q}. 
\end{align*}
Further, let $x,y$ be indeterminates, we define $C_{x,\alpha},C_{y,\alpha}$ by $C_{x,\alpha}=O_{q^2}$ and $C_{y,\alpha}=\phi(\alpha)\otimes J_q$ for $\alpha\in\mathbb{F}_q$, where $O_{q^2}$ denotes the zero matrix of order $q^2$. 

It is known that a symmetric Latin square of order $v$ with constant diagonal exists for any positive even integer $v$, see \cite{K}. 
Let $L=(L(a,a'))_{a,a'\in S}$ be a symmetric Latin square of order $q+2$ on the symbol set $S=\mathbb{F}_q\cup\{x,y\}$ with  constant diagonal $x$. 
Write $L$ as $L=\sum_{a\in S}a\cdot P_a$, where $P_a$ is a symmetric permutation matrix of order $q+2$. 
Note that $P_x=I_{q+2}$. 

From the $(0,1)$-matrices $C_{\alpha,\alpha'}$'s and the Latin square $L$, we construct symmetric designs as follows. 
For $\alpha \in \mathbb{F}_q$, we define a $(q+2)q^2\times (q+2)q^2$ $(0,1)$-matrix $N_{\alpha}$ to be 
\begin{align*}
N_\alpha=(C_{L(a,a'),\alpha})_{a,a'\in S}=\sum_{a\in \mathbb{F}_q\cup\{y\}} P_a\otimes C_{a,\alpha}. 
\end{align*}
In order to show that each $N_{\alpha}$ is a symmetric design and study more properties, we prepare a lemma on $C_{\alpha,\alpha'}$ and $P_a$.  
\begin{lemma}\label{lem:1}
\begin{enumerate}
\item For $\alpha\in\mathbb{F}_q$, $\sum_{a\in\mathbb{F}_q\cup\{y\}}C_{a,\alpha}=q I_q\otimes \phi(\alpha)+(J_q+\phi(\alpha)-I_q)\otimes J_q$. 
\item For $a\in\mathbb{F}_q\cup\{y\}$ and $\alpha,\alpha'\in\mathbb{F}_q$,
$C_{a,\alpha}C_{a,\alpha'}=q C_{a,\alpha+\alpha'}$.
\item For distinct $a,a'\in\mathbb{F}_q\cup\{y\}$ and $\alpha,\alpha'\in\mathbb{F}_q$, $C_{a,\alpha}C_{a',\alpha'}=J_{q^2}$.
\item For $\alpha,\alpha',\alpha''\in\mathbb{F}_q$, $(I_q\otimes \phi(\alpha''))C_{\alpha,\alpha'}=C_{\alpha,\alpha'+\alpha''}$. 
\item For $\alpha,\alpha'\in\mathbb{F}_q$, $(I_q\otimes \phi(\alpha))C_{y,\alpha'}=C_{y,\alpha'}$. 
\item $\sum_{a,b\in \mathbb{F}_q\cup\{y\},a\neq b} P_{a}P_{b}=q(J_{q+2}-I_{q+2})$. 
\end{enumerate}
\end{lemma}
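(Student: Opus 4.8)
The plan is to reduce parts (ii)–(v) to a handful of elementary facts about the map $\phi$, to handle the aggregate identity (i) by a block-wise case split, and to treat (vi) by a separate Latin-square computation. First I would record the properties of $\phi$ that drive everything. Since $\phi$ is a homomorphism into $GL_q(\mathbb{R})$ with each $\phi(\alpha)$ a symmetric permutation matrix, we have $\phi(\alpha)\phi(\alpha')=\phi(\alpha+\alpha')$, $\phi(\alpha)^\top=\phi(\alpha)$, and $\phi(\alpha)^2=\phi(2\alpha)=\phi(0)=I_q$ because the characteristic is two. Because a permutation matrix has all row and column sums equal to $1$, we also get $\phi(\alpha)J_q=J_q\phi(\alpha)=J_q$. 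Finally, since $\phi$ is the regular representation of $\mathbb{F}_2^m$ ($R$ being the regular representation of $\mathbb{Z}/2$ and the Kronecker product of regular representations being the regular representation of the direct product), summing over the whole group yields $\sum_{\gamma\in\mathbb{F}_q}\phi(\gamma)=\otimes_{i=1}^m(I_2+R)=\otimes_{i=1}^m J_2=J_q$. These are the only structural inputs I will need.

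For parts (ii)–(v) I would compute block by block, reading off from the definition that the $(\beta,\beta')$ block of $C_{a,\alpha}$ (for $a\in\mathbb{F}_q$) is $\phi(a(\beta'-\beta)+\alpha)$. For (ii) the $(\beta,\beta'')$ block of $C_{a,\alpha}C_{a,\alpha'}$ is $\sum_{\beta'}\phi(a(\beta'-\beta)+\alpha)\phi(a(\beta''-\beta')+\alpha')$; the homomorphism property collapses the exponent to $a(\beta''-\beta)+\alpha+\alpha'$, independent of $\beta'$, so the sum over the $q$ values of $\beta'$ gives $q$ times the corresponding block of $C_{a,\alpha+\alpha'}$; the $a=y$ case is the one-line identity $(\phi(\alpha)\otimes J_q)(\phi(\alpha')\otimes J_q)=q\,\phi(\alpha+\alpha')\otimes J_q$ using $J_q^2=qJ_q$. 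For (iii) the same block computation leaves a factor $\phi((a-a')\beta'+\cdots)$ inside the sum; since $a\neq a'$ gives $a-a'\neq 0$, as $\beta'$ runs over $\mathbb{F}_q$ the argument runs over all of $\mathbb{F}_q$, and $\sum_\gamma\phi(\gamma)=J_q$ makes every block equal to $J_q$, i.e. $C_{a,\alpha}C_{a',\alpha'}=J_{q^2}$; when exactly one index is $y$ I would instead use $J_q\phi(\gamma)=\phi(\gamma)J_q=J_q$ together with the unit row/column sums of $\phi$ to reach the same conclusion. Parts (iv) and (v) are then immediate from $\phi(\alpha'')\phi(\alpha(\beta'-\beta)+\alpha')=\phi(\alpha(\beta'-\beta)+\alpha'+\alpha'')$ and from $\phi(\alpha)J_q=J_q$, respectively, after distributing $I_q\otimes\phi(\cdot)$ over the block structure.

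For (i) I would first evaluate $\sum_{a\in\mathbb{F}_q}C_{a,\alpha}$ by the same block split: on a diagonal block $\beta=\beta'$ the summand is $\phi(\alpha)$ for every $a$, giving $q\phi(\alpha)$; on an off-diagonal block $\beta\neq\beta'$ the map $a\mapsto a(\beta'-\beta)+\alpha$ is a bijection of $\mathbb{F}_q$, so the block sum is $\sum_\gamma\phi(\gamma)=J_q$. In Kronecker form this reads $qI_q\otimes\phi(\alpha)+(J_q-I_q)\otimes J_q$; adding $C_{y,\alpha}=\phi(\alpha)\otimes J_q$ and combining the two terms carried on $J_q$ gives exactly $qI_q\otimes\phi(\alpha)+(J_q+\phi(\alpha)-I_q)\otimes J_q$.

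Part (vi) is the one that sits apart from the $\phi$-calculus, and is where I would be most careful. Writing $S'=\mathbb{F}_q\cup\{y\}=S\setminus\{x\}$, the Latin-square property of $L$ gives $\sum_{a\in S}P_a=J_{q+2}$, and since $P_x=I_{q+2}$ we get $\sum_{a\in S'}P_a=J_{q+2}-I_{q+2}$. Squaring and separating diagonal from off-diagonal terms, $\big(\sum_{a\in S'}P_a\big)^2=\sum_{a\in S'}P_a^2+\sum_{a\neq b}P_aP_b$; the crucial observation is that each $P_a$, being a symmetric permutation matrix, is an involution, so $P_a^2=I_{q+2}$ and $\sum_{a\in S'}P_a^2=(q+1)I_{q+2}$. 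Using $(J_{q+2}-I_{q+2})^2=qJ_{q+2}+I_{q+2}$ and solving yields $\sum_{a\neq b}P_aP_b=qJ_{q+2}+I_{q+2}-(q+1)I_{q+2}=q(J_{q+2}-I_{q+2})$. None of the six parts is deep; the real content is packaging everything into the three identities $\sum_\gamma\phi(\gamma)=J_q$, $\phi(\alpha)J_q=J_q\phi(\alpha)=J_q$, and $\sum_{a\in S}P_a=J_{q+2}$, together with the involution identity $P_a^2=I_{q+2}$. The only place where an error is easy to make is the index bookkeeping in the block products of (ii) and (iii), in particular keeping the order of $\phi(\cdot)J_q$ versus $J_q\phi(\cdot)$ straight when one factor is of $y$-type, so that is where I would concentrate the most care.
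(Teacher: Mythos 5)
Your proposal is correct and follows essentially the same route as the paper: blockwise computation using the homomorphism property of $\phi$ and the identity $\sum_{\gamma\in\mathbb{F}_q}\phi(\gamma)=J_q$ for (i)--(v), and the facts $\sum_{a\in S\setminus\{x\}}P_a=J_{q+2}-I_{q+2}$ and $P_a^2=I_{q+2}$ for (vi) (you square the sum where the paper expands $\sum_a P_a(J_{q+2}-I_{q+2}-P_a)$, an equivalent rearrangement). No gaps.
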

\begin{proof}
(i): For $\alpha,\beta,\beta'\in\mathbb{F}_q$, the $(\beta,\beta')$-entry of $\sum_{\gamma\in\mathbb{F}_q}C_{\gamma,\alpha}$ is 
\begin{align*}
\sum_{\gamma\in\mathbb{F}_q}\phi(\gamma(-\beta+\beta')+\alpha)&=\begin{cases}\sum_{\gamma\in\mathbb{F}_q}\phi(\alpha) & \text{ if } \beta=\beta' \\ \sum_{\gamma'\in\mathbb{F}_q}\phi(\gamma'+\alpha) & \text{ if } \beta\neq \beta' \end{cases}\\
&=\begin{cases}q\phi(\alpha) & \text{ if } \beta=\beta', \\ J_q & \text{ if } \beta\neq \beta', \end{cases}
\end{align*}
which yields $\sum_{\gamma\in\mathbb{F}_q}C_{\gamma,\alpha}=q I_q\otimes \phi(\alpha)+(J_q-I_q)\otimes J_q$. 
Therefore 
\begin{align*}
\sum_{a\in\mathbb{F}_q\cup\{y\}}C_{a,\alpha}=\sum_{\gamma\in\mathbb{F}_q}C_{\gamma,\alpha}+C_{y,\alpha}
=q I_q\otimes \phi(\alpha)+(J_q+\phi(\alpha)-I_q)\otimes J_q.
\end{align*} 

(ii): For $a=y$, $C_{y,\alpha}C_{y,\alpha'}=(\phi(\alpha)\otimes J_q)(\phi(\alpha')\otimes J_q)=q\phi(\alpha+\alpha')\otimes J_q=qC_{y,\alpha+\alpha'}$. 
For $a,\beta,\beta'\in\mathbb{F}_q$, the $(\beta,\beta')$-entry of $C_{a,\alpha}C_{a,\alpha'}$ is 
\begin{align*}
\sum_{\gamma\in\mathbb{F}_q}\phi(a(-\beta+\gamma)+\alpha)\phi(a(-\gamma+\beta')+\alpha')
&=\sum_{\gamma\in\mathbb{F}_q}\phi(a(-\beta+\beta')+\alpha+\alpha')\\
&=q\phi(a(-\beta+\beta')+\alpha+\alpha'). 
\end{align*}
Thus we have $C_{a,\alpha}C_{a,\alpha'}=q C_{a,\alpha+\alpha'}$. 

(iii): The case of $a\in\mathbb{F}_q$ and $a'=y$ follows from the fact that $C_{a,\alpha}$ is a block matrix whose $q\times q$ sub-block is a permutation matrix. The case of $a,a'\in\mathbb{F}_q,a\neq a'$ follows from a similar calculation to (ii) with the fact that $ \{(a-a')\gamma\mid \gamma\in\mathbb{F}_q\}=\mathbb{F}_q$. 

(iv) and (v) are routine, and (vi) follows from the equations below. Recall that $S=\mathbb{F}_q\cup\{x,y\}$. 
\begin{align*}
\sum_{a,b\in \mathbb{F}_q\cup\{y\},a\neq b} P_{a}P_{b}&=\sum_{a\in\mathbb{F}_q\cup\{y\}} P_{a}(\sum_{b\in S\setminus\{x,a\}}P_{b})\\
&=\sum_{a\in\mathbb{F}_q\cup\{y\}} P_{a}(J_{q+2}-I_{q+2}-P_{a})\\
&=\sum_{a\in\mathbb{F}_q\cup\{y\}} (J_{q+2}-P_{a}-I_{q+2})\\
&=(q+1)(J_{q+2}-I_{q+2})-\sum_{a\in\mathbb{F}_q\cup\{y\}}P_{a}\\
&=q(J_{q+2}-I_{q+2}). \qedhere
\end{align*}
\end{proof}

Now we are ready to prove results for $N_{\alpha}$'s. 
Note that the result for $N_0$ being a symmetric $((q+2)q^2,q^2+q,q)$-design is well-known, see for example \cite[Exercise~5.7]{S}. 
\begin{theorem}\label{thm:1}
\begin{enumerate}
\item For any $\alpha\in\mathbb{F}_q$, $N_{\alpha}$ is symmetric. 
\item For any $\alpha,\beta\in\mathbb{F}_q$, 
\begin{align*}
N_{\alpha} N_{\beta}
= q^2 I_{q(q+2)}\otimes \phi(\alpha+\beta)+q I_{q+2}\otimes \phi(\alpha+\beta)\otimes J_q+q (J_{q(q+2)}-I_{q(q+2)})\otimes J_{q}. 
\end{align*}
In particular, $N_{\alpha}^2=q^2 I_{(q+2)q^2}+q J_{(q+2)q^2}$, that is, $N_\alpha$ is the incidence matrix of a symmetric $((q+2)q^2,q^2+q,q)$-design.
\end{enumerate}
\end{theorem}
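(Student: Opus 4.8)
The plan is to handle both parts through the tensor expression $N_\alpha=\sum_{a\in\mathbb{F}_q\cup\{y\}}P_a\otimes C_{a,\alpha}$ and to reduce everything to Lemma~\ref{lem:1} together with the fact that $\mathbb{F}_q$ has characteristic two. No genuinely new idea is needed beyond organizing these ingredients.

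For part (i), since each $P_a$ is a symmetric permutation matrix, transposing gives $N_\alpha^\top=\sum_{a}P_a\otimes C_{a,\alpha}^\top$, so it suffices to check that each $C_{a,\alpha}$ is symmetric. For $a=y$ this is immediate, because $C_{y,\alpha}=\phi(\alpha)\otimes J_q$ and both $\phi(\alpha)$ and $J_q$ are symmetric. For $a\in\mathbb{F}_q$, the $(\beta,\beta')$ block of $C_{a,\alpha}^\top$ is $\phi(a(-\beta'+\beta)+\alpha)^\top=\phi(a(\beta-\beta')+\alpha)$, where I use that every $\phi$-value is symmetric; the decisive point is that in characteristic two $-\beta+\beta'=\beta+\beta'=\beta-\beta'$, so this coincides with the $(\beta,\beta')$ block $\phi(a(-\beta+\beta')+\alpha)$ of $C_{a,\alpha}$. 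Hence each $C_{a,\alpha}$, and therefore $N_\alpha$, is symmetric.

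For part (ii), I would expand
\[
N_\alpha N_\beta=\sum_{a,b\in\mathbb{F}_q\cup\{y\}}(P_aP_b)\otimes(C_{a,\alpha}C_{b,\beta})
\]
and split the sum into the diagonal terms $a=b$ and the off-diagonal terms $a\neq b$. On the diagonal, $P_a^2=I_{q+2}$ (a symmetric permutation matrix is an involution) and Lemma~\ref{lem:1}(ii) gives $C_{a,\alpha}C_{a,\beta}=qC_{a,\alpha+\beta}$, so the diagonal contribution is $qI_{q+2}\otimes\sum_{a}C_{a,\alpha+\beta}$, which Lemma~\ref{lem:1}(i) evaluates in closed form. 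Off the diagonal, Lemma~\ref{lem:1}(iii) gives $C_{a,\alpha}C_{b,\beta}=J_{q^2}$, so that contribution is $\big(\sum_{a\neq b}P_aP_b\big)\otimes J_{q^2}=q(J_{q+2}-I_{q+2})\otimes J_{q^2}$ by Lemma~\ref{lem:1}(vi).

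It then remains to add the two contributions and simplify, and this is the only place I expect any friction. The work is purely Kronecker-algebra bookkeeping: repeatedly using $J_q\otimes J_q=J_{q^2}$, $I_{q+2}\otimes I_q=I_{q(q+2)}$, and $J_{q+2}\otimes J_{q^2}=J_{q(q+2)}\otimes J_q$, and noticing that the two stray terms $qI_{q+2}\otimes J_{q^2}$ arising from the diagonal and off-diagonal parts cancel. This collapses the expression to the claimed form, with $\phi(\alpha+\beta)$ appearing in the two $I$-type terms. For the special case $\beta=\alpha$ one has $\alpha+\beta=0$ and $\phi(0)=I_q$, after which a further cancellation leaves exactly $q^2I_{(q+2)q^2}+qJ_{(q+2)q^2}$. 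Finally, since $N_\alpha$ is a $(0,1)$-matrix with constant row sum $(q+1)q=q^2+q$ (each block-row of the Latin square meets the diagonal symbol $x$ exactly once, contributing $O_{q^2}$, while the remaining $q+1$ blocks each have row sum $q$), the identity $N_\alpha N_\alpha^\top=N_\alpha^2=q^2I+qJ$ exhibits $N_\alpha$ as the incidence matrix of a symmetric $((q+2)q^2,q^2+q,q)$-design. Conceptually everything is forced by Lemma~\ref{lem:1}; the only care required is in the final tensor simplification.
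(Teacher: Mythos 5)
Your proof is correct and follows essentially the same route as the paper: both argue symmetry of $N_\alpha$ from the symmetry of the $P_a$ and $C_{a,\alpha}$, and both compute $N_\alpha N_\beta$ by splitting $\sum_{a,b}P_aP_b\otimes C_{a,\alpha}C_{b,\beta}$ into diagonal and off-diagonal parts and invoking Lemma~\ref{lem:1}(i),(ii),(iii),(vi). Your extra details (the characteristic-two check that each $C_{a,\alpha}$ is symmetric, and the explicit row-sum count $q^2+q$) are correct and only make explicit what the paper leaves implicit.
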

\begin{proof}
(i): It follows from the properties that the matrices $P_a$ and $C_{a,\alpha}$ are symmetric for $a\in\mathbb{F}_q\cup\{y\}$ and $\alpha\in\mathbb{F}_q$. 

(ii): We use Lemma~\ref{lem:1} to obtain:  
\begin{align*}
N_{\alpha} N_{\beta} &
=\sum_{a,b\in \mathbb{F}_q\cup\{y\}} P_{a}P_{b}\otimes C_{a,\alpha}C_{b,\beta}\\
&=\sum_{a\in\mathbb{F}_q\cup\{y\}} P_{a}^2\otimes C_{a,\alpha}C_{a,\beta}+\sum_{a,b\in\mathbb{F}_q\cup\{y\},a\neq b} P_{a}P_{b}\otimes C_{a,\alpha}C_{b,\beta}\\
&=\sum_{a\in\mathbb{F}_q\cup\{y\}} I_{q+2}\otimes q C_{a,\alpha+\beta}+\sum_{a,b\in\mathbb{F}_q\cup\{y\},a\neq b} P_{a}P_{b}\otimes J_{q^2}\\
&=q I_{q+2}\otimes (qI_q\otimes \phi(\alpha+\beta)+(J_q+\phi(\alpha+\beta)-I_q)\otimes J_q)+q(J_{q+2}-I_{q+2})\otimes J_{q^2}\\
&=q^2 I_{q(q+2)}\otimes \phi(\alpha+\beta)+qI_{q+2}\otimes \phi(\alpha+\beta)\otimes J_q+q (J_{q(q+2)}-I_{q(q+2)})\otimes J_{q}. 
\end{align*}
The case of $\alpha=\beta$ follows from $\phi(0)=I_q$. 
\end{proof}

Now we obtain our main result in this section. 
\begin{theorem}
The set of matrices $\{N_{\alpha},I_{q+2}\otimes (J_{q^2}-I_{q^2}),I_{(q+2)q^2}\mid \alpha\in \mathbb{F}_q \}$ is a commutative strongly regular decomposition of the complete graph on $(q+2)q^2$ vertices. 
\end{theorem}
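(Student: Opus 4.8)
The plan is to check the four axioms (i)--(iv) of a commutative decomposition for the matrices $A_0=I_{(q+2)q^2}$, $B:=I_{q+2}\otimes(J_{q^2}-I_{q^2})$, and the $N_\alpha$ for $\alpha\in\mathbb{F}_q$, and then to verify that each of the $q+1$ non-identity matrices is the adjacency matrix of a strongly regular graph. Axioms (i) and (iii) are immediate: $A_0$ is the identity by definition, each $N_\alpha$ is symmetric by Theorem~\ref{thm:1}(i), and $B$ is visibly symmetric with zero diagonal.

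For the partition axiom (ii) I would first record the one fact not already contained in Lemma~\ref{lem:1}, namely that the regular permutation representation satisfies $\sum_{\delta\in\mathbb{F}_q}\phi(\delta)=J_q$ (since $\sum_{x\in\mathbb{F}_2}R^{x}=J_2$ and $\phi$ is a Kronecker product). From this, $\sum_{\alpha\in\mathbb{F}_q}C_{a,\alpha}=J_{q^2}$ for every $a\in\mathbb{F}_q\cup\{y\}$, whence $\sum_{\alpha\in\mathbb{F}_q}N_\alpha=\big(\sum_{a\in\mathbb{F}_q\cup\{y\}}P_a\big)\otimes J_{q^2}=(J_{q+2}-I_{q+2})\otimes J_{q^2}$, using the Latin square identity $\sum_{a\in S}P_a=J_{q+2}$ together with $P_x=I_{q+2}$. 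Adding $I_{q+2}\otimes I_{q^2}$ and $B=I_{q+2}\otimes(J_{q^2}-I_{q^2})$ collapses the tensor factors to $J_{q+2}\otimes J_{q^2}=J_{(q+2)q^2}$, giving (ii); since all summands are $(0,1)$-matrices, pairwise disjointness is then automatic.

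For commutativity (iv), the products among the $N_\alpha$ are handled directly by Theorem~\ref{thm:1}(ii): the closed form for $N_\alpha N_\beta$ depends only on $\alpha+\beta$, hence is symmetric in $\alpha,\beta$, so $N_\alpha N_\beta=N_\beta N_\alpha$. To commute $N_\alpha$ with $B$ I would write $B=(I_{q+2}\otimes J_{q^2})-I$ and reduce to showing that $N_\alpha$ commutes with $I_{q+2}\otimes J_{q^2}$; this holds because each $C_{a,\alpha}$ has constant row and column sum $q$ (a $q\times q$ block array of permutation matrices when $a\in\mathbb{F}_q$, and $\phi(\alpha)\otimes J_q$ when $a=y$), so $C_{a,\alpha}J_{q^2}=J_{q^2}C_{a,\alpha}=qJ_{q^2}$, and both $N_\alpha(I_{q+2}\otimes J_{q^2})$ and $(I_{q+2}\otimes J_{q^2})N_\alpha$ reduce to $q\big(\sum_a P_a\big)\otimes J_{q^2}$.

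Finally, strong regularity. For $N_\alpha$, Theorem~\ref{thm:1}(ii) with $\beta=\alpha$ gives $N_\alpha^2=q^2I+qJ$; since $N_\alpha$ has zero diagonal (the diagonal blocks correspond to $L(s,s)=x$ and $C_{x,\alpha}=O$), this is exactly $N_\alpha^2=(k-\mu)I+(\lambda-\mu)N_\alpha+\mu J$ with $k=q^2+q$ and $\lambda=\mu=q$, so $N_\alpha$ is strongly regular with parameters $((q+2)q^2,q^2+q,q,q)$. For $B$, a direct computation of $B^2=I_{q+2}\otimes\big((q^2-2)J_{q^2}+I_{q^2}\big)$ matches $(q^2-1)I+(q^2-2)B$, identifying $B$ as the disjoint union of $q+2$ cliques of size $q^2$, strongly regular with parameters $((q+2)q^2,q^2-1,q^2-2,0)$. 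The only genuinely new computation is the evaluation of $\sum_{\alpha}N_\alpha$, and I expect the mild bookkeeping there --- keeping the two tensor factors straight, the $(q+2)$-part carrying the Latin square and the $q^2$-part carrying the $C_{a,\alpha}$ --- to be the main point requiring care; everything else is a direct appeal to Lemma~\ref{lem:1} and Theorem~\ref{thm:1}.
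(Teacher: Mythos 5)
Your proposal is correct and follows essentially the same route as the paper's proof: the partition identity $\sum_\alpha N_\alpha + I_{q+2}\otimes(J_{q^2}-I_{q^2}) = J_{(q+2)q^2}-I_{(q+2)q^2}$ (which the paper leaves as ``easy to see'' and you verify via $\sum_{\delta}\phi(\delta)=J_q$), strong regularity from Theorem~\ref{thm:1}(ii) with $\alpha=\beta$, mutual commutativity of the $N_\alpha$ from the symmetry of the product formula in $\alpha+\beta$, and commutation with the clique part via the constant row and column sums of the blocks. The only difference is the level of detail; no gaps.
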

\begin{proof}
It is easy to see that $\sum_{\alpha\in\mathbb{F}_q} N_{\alpha}+I_{q+2}\otimes (J_{q^2}-I_{q^2})=J_{(q+2)q^2}-I_{(q+2)q^2}$.  
By Theorem~\ref{thm:1} (i), (ii) for $\alpha=\beta$, the matrices $N_{\alpha}$, $\alpha\in\mathbb{F}_q$, are the adjacency matrices of strongly regular graphs with parameters $((q+2)q^2,q^2+q,q,q)$.
And $I_{q+2}\otimes (J_{q^2}-I_{q^2})$ is the adjacency matrix of a strongly regular graph with parameters $((q+2)q^2,q^2-1,q^2-2,0)$. 
Thus they form a strongly regular decomposition. 
Furthermore $N_{\alpha}$, $\alpha\in\mathbb{F}_q$, commute each other by Theorem~\ref{thm:1} (ii) for $\alpha \neq \beta$.
Since each $N_{\alpha}$ has constant row and column sums in the off-diagonal blocks, each $N_{\alpha}$ commutes with $I_{q+2}\otimes (J_{q^2}-I_{q^2})$. 
Therefore the set of matrices is a commutative strongly regular decomposition. 
\end{proof}
Unfortunately, as Theorem~\ref{thm:1} (ii) showed, the matrices $N_{\alpha}$, $\alpha\in\mathbb{F}_q$,  $I_{q+2}\otimes (J_{q^2}-I_{q^2}),I_{(q+2)q^2}$ do not form a symmetric association scheme. 
Now we are going to refine the adjacency matrices in order to obtain a symmetric association scheme.

For $\alpha\in\mathbb{F}_q,i\in\{0,1,2,3\}$, we define $(0,1)$-matrices $A_{\alpha,i}$ as 
\begin{align*}
A_{\alpha,0}&=I_{q(q+2)}\otimes \phi(\alpha),\\
A_{\alpha,1}&=I_{q+2}\otimes C_{y,\alpha},\\
A_{\alpha,2}&=P_{y}\otimes C_{y,\alpha},\\
A_{\alpha,3}&=N_{\alpha}-P_{y}\otimes C_{y,\alpha}. 
\end{align*}
Note that 
$A_{0,0}=I_{(q+2)q^2}$ and $\sum_{\alpha\in \mathbb{F}_q}A_{\alpha,0}=A_{0,1}$. 
Let $\mathbb{F}_q^*=\mathbb{F}_q\setminus\{0\}$.
\begin{theorem}\label{thm:as}
The set of matrices $\{A_{\alpha,0},A_{\beta,1},A_{\alpha,2},A_{\alpha,3}\mid \alpha\in\mathbb{F}_q,\beta\in\mathbb{F}_q^*\}$ is a symmetric association scheme.
\end{theorem}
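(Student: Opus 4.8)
The plan is to verify the defining features of a symmetric association scheme directly: the given matrices are symmetric, pairwise disjoint $(0,1)$-matrices, one of them is the identity, they sum to the all-ones matrix, and the linear span of the whole family is closed under matrix multiplication with non-negative integer structure constants. Commutativity then comes for free: once every product of two members is shown to be a non-negative integer combination of the (symmetric) members, each product is itself symmetric, whence $A_iA_j=(A_iA_j)^\top=A_jA_i$. So after the bookkeeping, everything reduces to a single closed multiplication table.

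First I would record the members in clean tensor form. Using $C_{y,\alpha}=\phi(\alpha)\otimes J_q$ and peeling the $a=y$ term off $N_\alpha$, the definitions read $A_{\alpha,0}=I_{q+2}\otimes I_q\otimes\phi(\alpha)$, $A_{\beta,1}=I_{q+2}\otimes\phi(\beta)\otimes J_q$, $A_{\alpha,2}=P_y\otimes\phi(\alpha)\otimes J_q$, and $A_{\alpha,3}=\sum_{a\in\mathbb{F}_q}P_a\otimes C_{a,\alpha}$. Symmetry of each follows from symmetry of $\phi(\alpha)$, $J_q$, $P_a$ and Theorem~\ref{thm:1}(i). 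Disjointness is read off the first two tensor slots: types $0,1$ occupy the ``diagonal blocks'' $I_{q+2}\otimes(\cdots)$ whereas types $2,3$ occupy $P_y\otimes(\cdots)$ and $\sum_{a\in\mathbb{F}_q}P_a\otimes(\cdots)$ respectively (all disjoint from $I_{q+2}$ and from each other since $y\notin\mathbb{F}_q$ and the $P_a$ have pairwise disjoint support), while type $0$ versus type $1$ are separated in the middle slot by $I_q$ versus $\phi(\beta)$ with $\beta\neq0$; within a type, distinct $\alpha$ give disjoint supports because $\phi$ is injective. Using $\sum_{\alpha}\phi(\alpha)=J_q$ and $\sum_{a\in S}P_a=J_{q+2}$ with $P_x=I_{q+2}$, I would then verify the total sum is $J_{(q+2)q^2}$, the two key partial sums being $\sum_{\alpha}A_{\alpha,2}=P_y\otimes J_{q^2}$ and $\sum_{\alpha}A_{\alpha,3}=(J_{q+2}-I_{q+2}-P_y)\otimes J_{q^2}$, the latter via Lemma~\ref{lem:1}(i) restricted to $\mathbb{F}_q$.

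Next I would assemble the multiplication table. Products not involving type $3$ reduce to $P_y^2=I_{q+2}$, $\phi(\alpha)\phi(\beta)=\phi(\alpha+\beta)$, $\phi(\alpha)J_q=J_q$, $J_q^2=qJ_q$; e.g.\ $A_{\alpha,0}A_{\beta,1}=A_{\beta,1}$, $A_{\beta,1}A_{\gamma,2}=qA_{\beta+\gamma,2}$, $A_{\gamma,2}A_{\gamma',2}=qA_{\gamma+\gamma',1}$, where any resulting index $0$ in a type-$1$ slot is re-expanded through $A_{0,1}=\sum_{\alpha}A_{\alpha,0}$. Products of a type-$3$ matrix with types $0,1,2$ invoke Lemma~\ref{lem:1}(iv),(iii): $A_{\alpha,0}A_{\gamma,3}=A_{\alpha+\gamma,3}$, whereas $A_{\beta,1}A_{\gamma,3}=A_{\alpha,2}A_{\gamma,3}=\sum_{\alpha'}A_{\alpha',3}$, since $C_{y,\cdot}C_{a,\cdot}=J_{q^2}$ for $a\in\mathbb{F}_q$ collapses the sum to $(J_{q+2}-I_{q+2}-P_y)\otimes J_{q^2}$.

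The main obstacle is the single product $A_{\alpha,3}A_{\gamma,3}=\sum_{a,b\in\mathbb{F}_q}P_aP_b\otimes C_{a,\alpha}C_{b,\gamma}$, which I would split by $a=b$ and $a\neq b$. The diagonal part uses $P_a^2=I_{q+2}$ and Lemma~\ref{lem:1}(ii), followed by Lemma~\ref{lem:1}(i) over $\mathbb{F}_q$, to give $q^2A_{\alpha+\gamma,0}+q\sum_{\beta\in\mathbb{F}_q^*}A_{\beta,1}$. The off-diagonal part uses Lemma~\ref{lem:1}(iii) to factor out $J_{q^2}$ and then needs the $\mathbb{F}_q$-analogue of Lemma~\ref{lem:1}(vi): from $\sum_{a\in\mathbb{F}_q}P_a=J_{q+2}-I_{q+2}-P_y$ together with $P_yJ_{q+2}=J_{q+2}$ and $P_y^2=I_{q+2}$ one computes $\sum_{a\neq b\in\mathbb{F}_q}P_aP_b=(\sum_{a\in\mathbb{F}_q}P_a)^2-qI_{q+2}=(q-2)(J_{q+2}-I_{q+2})+2P_y$, and re-expressing $(J_{q+2}-I_{q+2})\otimes J_{q^2}$ and $P_y\otimes J_{q^2}$ through the type-$2$ and type-$3$ column sums yields $q\sum_{\alpha'}A_{\alpha',2}+(q-2)\sum_{\alpha'}A_{\alpha',3}$. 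Thus $A_{\alpha,3}A_{\gamma,3}=q^2A_{\alpha+\gamma,0}+q\sum_{\beta\in\mathbb{F}_q^*}A_{\beta,1}+q\sum_{\alpha'}A_{\alpha',2}+(q-2)\sum_{\alpha'}A_{\alpha',3}$, with structure constants $q^2,q,q-2$ all non-negative integers since $q=2^m\geq2$. This closes the span, and together with $A_{0,0}=I_{(q+2)q^2}$, symmetry, disjointness, and the sum-to-$J$ identity, it exhibits the family as a symmetric association scheme.
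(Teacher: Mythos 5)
Your proof is correct and takes essentially the same route as the paper: symmetry and disjointness are read off the tensor structure, and closure of the span is verified product-by-product using Lemma~\ref{lem:1}, with the structure constants visibly non-negative integers. The only organizational difference is that you compute $A_{\alpha,3}A_{\gamma,3}$ directly in the refined basis via $\sum_{a\neq b\in\mathbb{F}_q}P_aP_b=(q-2)(J_{q+2}-I_{q+2})+2P_y$ --- precisely the calculation the paper defers to Appendix~\ref{appendix} --- whereas the body of the paper routes the type-$3$ products through $N_\alpha N_\beta$ and Theorem~\ref{thm:1}(ii).
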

\begin{proof}
By the definition of $N_{\alpha}$,  $A_{\alpha,i}$ are non-zero $(0,1)$-matrices such that $\sum_{\alpha\in\mathbb{F}_q}(A_{\alpha,0}+A_{\alpha,2}+A_{\alpha,3})+\sum_{\alpha\in\mathbb{F}_q^*}A_{\alpha,1}=J_{(q+2)q^2}$. 
Since each $P_a$ and $C_{a,\alpha}$ are symmetric, each $A_{\alpha,i}$ is symmetric. 
By $\sum_{\alpha\in \mathbb{F}_q}A_{\alpha,0}=A_{0,1}$, it is enough to show that $\mathcal{A}:=\text{span}_{\mathbb{R}}\{A_{\alpha,i}\mid \alpha\in\mathbb{F}_q,i\in\{0,1,2,3\}\}$ is closed under the matrix multiplication. 

From Lemma~\ref{lem:1}, it follows that $\mathcal{A}'=\text{span}_{\mathbb{R}}\{A_{\alpha,i}\mid \alpha\in\mathbb{F}_q,i\in\{0,1,2\}\}$ is closed under matrix multiplication. 
Since $\mathcal{A}=\mathcal{A}'+\text{span}_{\mathbb{R}}\{N_{\alpha}\mid \alpha\in\mathbb{F}_q\}$, 
we then need to calculate $A_{\alpha,i}N_{\beta},N_{\beta}A_{\alpha,i}$ and $N_{\alpha} N_{\beta}$. 
The case $N_{\alpha} N_{\beta}$ follows from Theorem~\ref{thm:1} (ii). 
For the case of $A_{\alpha,i}N_{\beta}$, we use Lemma~\ref{lem:1} to obtain 
\begin{align*}
A_{\alpha,0}N_{\beta}&=N_{\alpha+\beta},\\
A_{\alpha,1}N_{\beta}&=(J_{q+2}-I_{q+2}-P_{y})\otimes J_{q^2}+q P_{y}\otimes C_{y,\alpha+\beta}, \\ 
A_{\alpha,2}N_{\beta}&=(J_{q+2}-I_{q+2}-P_{y})\otimes J_{q^2}+q I_{q+2}\otimes C_{y,\alpha+\beta}. 
\end{align*} 
Finally $N_{\beta} A_{\alpha,i}$ follows from taking transpose of $A_{\alpha,i}N_{\beta}$ because all the adjacency matrices are symmetric. 
\end{proof}

From the proof of Theorem~\ref{thm:as}, the intersection numbers can be determined as follows. 
Precise calculations will be given in Appendix. 
Remark that the matrix $A_{0,1}$, which is not an adjacency matrix, is included in the following proposition.  

\begin{proposition}\label{prop:in}
Let $\alpha,\beta\in\mathbb{F}_q$. 
The following hold.
\begin{enumerate}
\item For $l\in\{0,3\}$, $A_{\alpha,0}A_{\beta,l}=q A_{\alpha+\beta,l}$, and for $l\in\{1,2\}$ $A_{\alpha,0}A_{\beta,l}=q A_{\beta,l}$.      
\item $A_{\alpha,1}A_{\beta,1}=A_{\alpha,2}A_{\beta,2}=q A_{\alpha+\beta,1}$ and $A_{\alpha,1}A_{\beta,2}=q A_{\alpha+\beta,2}$.  
\item $A_{\alpha,1}A_{\beta,3}=A_{\alpha,2}A_{\beta,3}=\sum_{\gamma\in\mathbb{F}_q}A_{\gamma,3}$ and $A_{\alpha,3}A_{\beta,3}=q^2A_{\alpha+\beta,0}+\sum_{\gamma\in\mathbb{F}_q}(q(-A_{\gamma,0}+A_{\gamma,1}+A_{\gamma,2})+(q-2)A_{\gamma,3})$. 
\end{enumerate}
\end{proposition}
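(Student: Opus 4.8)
The plan is to reduce every product appearing in the proposition to products that are already in hand. Three families are available: the multiplication table of $\mathcal{A}'=\text{span}_{\mathbb{R}}\{A_{\alpha,i}\mid\alpha\in\mathbb{F}_q,\ i\in\{0,1,2\}\}$, which is closed by Lemma~\ref{lem:1}; the mixed products $A_{\alpha,i}N_\beta$ (and their transposes $N_\beta A_{\alpha,i}$) recorded in the proof of Theorem~\ref{thm:as}; and the product $N_\alpha N_\beta$ of Theorem~\ref{thm:1}(ii). The identity that ties everything together is $A_{\alpha,3}=N_\alpha-A_{\alpha,2}$, which writes the only generator outside $\mathcal{A}'$ as $N_\alpha$ minus a generator of $\mathcal{A}'$. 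Consequently each product with an index-$3$ factor is obtained by expanding and substituting the three families above.

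For parts (i) and (ii) I would multiply out directly in the triple tensor structure (Latin factor of size $q+2$)$\,\otimes\,$(block factor of size $q$)$\,\otimes\,$(inner factor of size $q$), using $\phi(\alpha)\phi(\beta)=\phi(\alpha+\beta)$, $\phi(\alpha)J_q=J_q$, and parts (ii), (iv), (v) of Lemma~\ref{lem:1}; the instances involving $A_{\beta,3}$ reduce via $A_{\beta,3}=N_\beta-A_{\beta,2}$ to the already-computed $A_{\alpha,0}N_\beta$ and $A_{\alpha,0}A_{\beta,2}$. These are short and mechanical, and the coefficients can be read off immediately. For the two ``absorbing'' relations in part (iii), namely $A_{\alpha,1}A_{\beta,3}=A_{\alpha,2}A_{\beta,3}=\sum_{\gamma\in\mathbb{F}_q}A_{\gamma,3}$, I again substitute $A_{\beta,3}=N_\beta-A_{\beta,2}$: the block $(J_{q+2}-I_{q+2}-P_y)\otimes J_{q^2}$ that appears in $A_{\alpha,1}N_\beta$ and $A_{\alpha,2}N_\beta$ is precisely $\sum_{\gamma}A_{\gamma,3}$, while the leftover $qA_{\alpha+\beta,\cdot}$ term is exactly cancelled by $-A_{\alpha,i}A_{\beta,2}$ coming from part (ii).

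The real work, and the step I expect to be the main obstacle, is $A_{\alpha,3}A_{\beta,3}$. I would expand
\[
(N_\alpha-A_{\alpha,2})(N_\beta-A_{\beta,2})=N_\alpha N_\beta-N_\alpha A_{\beta,2}-A_{\alpha,2}N_\beta+A_{\alpha,2}A_{\beta,2},
\]
each summand being known. The difficulty is not the multiplication but re-expressing the outcome in the disjoint $(0,1)$-basis $\{A_{\gamma,i}\}$. The key is a set of ``row-sum'' identities, namely $\sum_\gamma A_{\gamma,0}=A_{0,1}=I_{q(q+2)}\otimes J_q$, $\sum_\gamma A_{\gamma,1}=I_{q+2}\otimes J_{q^2}$, $\sum_\gamma A_{\gamma,2}=P_y\otimes J_{q^2}$, and $\sum_\gamma A_{\gamma,3}=(J_{q+2}-I_{q+2}-P_y)\otimes J_{q^2}$, which combine to give
\[
(J_{q(q+2)}-I_{q(q+2)})\otimes J_q=\sum_{\gamma\in\mathbb{F}_q}\bigl(-A_{\gamma,0}+A_{\gamma,1}+A_{\gamma,2}+A_{\gamma,3}\bigr).
\]

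Feeding this into the term $q(J_{q(q+2)}-I_{q(q+2)})\otimes J_q$ of $N_\alpha N_\beta$ and collecting, the $A_{\alpha+\beta,1}$ contributions cancel ($+q$ from $N_\alpha N_\beta$, $-q$ from each of the two mixed products, $+q$ from $A_{\alpha,2}A_{\beta,2}$), and since $N_\alpha A_{\beta,2}$ and $A_{\alpha,2}N_\beta$ each carry a $-\sum_\gamma A_{\gamma,3}$, the coefficient of $\sum_\gamma A_{\gamma,3}$ falls from $q$ to $q-2$, leaving the stated expression with its isolated $q^2A_{\alpha+\beta,0}$. The delicate part is exactly this bookkeeping of cancellations; a convenient final check is that the valencies balance, the right-hand side having total valency $q^4=k_3^2$.
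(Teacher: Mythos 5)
Your overall strategy is sound, and for the one genuinely hard product it diverges from the paper in a worthwhile way. The paper's appendix computes $A_{\alpha,3}A_{\beta,3}$ from scratch as the double sum $\sum_{\gamma,\gamma'}P_{\gamma}P_{\gamma'}\otimes C_{\gamma,\alpha}C_{\gamma',\beta}$, which forces it to establish the extra permutation identity $\sum_{\gamma,\gamma'\in\mathbb{F}_q,\gamma\neq\gamma'}P_{\gamma}P_{\gamma'}=(q-2)(J_{q+2}-I_{q+2})+2P_{y}$; you instead expand $(N_\alpha-A_{\alpha,2})(N_\beta-A_{\beta,2})$ and recycle Theorem~\ref{thm:1}(ii) together with the mixed products from the proof of Theorem~\ref{thm:as}. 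Your row-sum identities are correct, the cancellation bookkeeping checks out ($q-q-q+q=0$ on $A_{\alpha+\beta,1}$, and the coefficient of $\sum_\gamma A_{\gamma,3}$ drops from $q$ to $q-2$ because each mixed product contributes $-\sum_\gamma A_{\gamma,3}$), and the valency test $q^4=k_3^2$ is a good sanity check. For $A_{\alpha,1}A_{\beta,3}$ and $A_{\alpha,2}A_{\beta,3}$ your substitution is exactly what the appendix does. The trade-off: the paper's route needs one more permutation-matrix lemma but stays entirely inside explicit tensor blocks; yours reuses earlier theorems but must convert $(J_{q(q+2)}-I_{q(q+2)})\otimes J_q$ into the $A_{\gamma,i}$ basis, which you do correctly.

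Two caveats, one of which is a real gap in what you claim to "read off immediately." First, part (i) as displayed cannot come out of your computation: $A_{\alpha,0}=I_{q(q+2)}\otimes\phi(\alpha)$ is a permutation matrix, so $A_{\alpha,0}A_{\beta,0}$ is again a permutation matrix and cannot equal $qA_{\alpha+\beta,0}$. The correct products are $A_{\alpha,0}A_{\beta,l}=A_{\alpha+\beta,l}$ for $l\in\{0,3\}$ and $A_{\alpha,0}A_{\beta,l}=A_{\beta,l}$ for $l\in\{1,2\}$, with coefficient $1$; this is what the appendix derives and what Lemma~\ref{lem:f}(i) requires. Your writeup should state the coefficients and flag the discrepancy rather than assert the displayed identity. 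Second, be careful which recorded mixed product you quote for $A_{\alpha,0}A_{\beta,3}$: the formula $A_{\alpha,0}N_\beta=N_{\alpha+\beta}$ in the proof of Theorem~\ref{thm:as} is itself off in the $P_y$ block, because Lemma~\ref{lem:1}(v) fixes $C_{y,\beta}$ rather than shifting it; the correct product is $A_{\alpha,0}N_\beta=\sum_{a\in\mathbb{F}_q}P_a\otimes C_{a,\alpha+\beta}+P_y\otimes C_{y,\beta}=A_{\alpha+\beta,3}+A_{\beta,2}$. Plugging the recorded version into $A_{\alpha,0}A_{\beta,3}=A_{\alpha,0}N_\beta-A_{\alpha,0}A_{\beta,2}$ would give $N_{\alpha+\beta}-A_{\beta,2}\neq A_{\alpha+\beta,3}$ for $\alpha\neq 0$; computing $A_{\alpha,0}N_\beta$ directly from Lemma~\ref{lem:1}(iv)--(v), as you also propose, gives the right answer $A_{\alpha+\beta,3}$.
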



\section{Eigenmatrices of symmetric association schemes and strongly regular decompositions}
We view $\mathbb{F}_{q}=\mathbb{Z}_2^m$ as the additive group. 
For $\alpha=(\alpha_1,\ldots,\alpha_m),\beta=(\beta_1,\ldots,\beta_m)\in\mathbb{Z}_2^m$, the inner product is defined by $\langle\alpha, \beta\rangle=\alpha_1\beta_1+\cdots+\alpha_m\beta_m$. 
Then for $\beta\in\mathbb{Z}_2^m$, the irreducible character denoted $\chi_{\beta}$ is $\chi_{\beta}(\alpha)=(-1)^{\langle\alpha, \beta\rangle}$ where $\alpha\in\mathbb{Z}_2^m$. 
The \defn{character table} $K$ of the abelian group $\mathbb{Z}_2^m$ is a $2^m\times 2^m$ matrix with rows and columns indexed by the elements of $\mathbb{Z}_2^m$ with $(\alpha,\beta)$-entry equal to $\chi_{\beta}(\alpha)$. 
Note that $\chi_{\beta}(\alpha)=\chi_{\alpha}(\beta)$. 
Then the Schur orthogonality relation shows $K K^\top=2^mI_{2^m}$, namely $K$ is a Hadamard matrix of order $2^m$. 
Moreover $K$ is transformed by permuting rows and columns to the Sylvester-type Hadamard matrix of order $2^m$. 
Here the Sylvester-type Hadamard matrix of order $2^m$ is defined as the $m$ tensor product of $\left(\begin{smallmatrix} 1 & 1 \\ 1 & -1 \end{smallmatrix}\right).$

To describe the primitive idempotents,  let $F_{\alpha,i}$, $\alpha\in\mathbb{F}_q,i\in\{0,1,2,3\}$, be 
\begin{align*}
F_{\alpha,i}&=\sum_{\gamma\in\mathbb{F}_q} \chi_{\alpha}(\gamma)A_{\gamma,i}.  
\end{align*}
\begin{lemma}\label{lem:f}
Let $\alpha,\beta\in\mathbb{F}_q$. 
The following hold.
\begin{enumerate}
\item For $l\in\{0,1,2,3\}$, $F_{\alpha,0}F_{\beta,l}=q\delta_{\alpha,\beta} F_{\alpha,l}$.  
\item $F_{\alpha,1}F_{\beta,1}=F_{\alpha,2}F_{\beta,2}=q^2\delta_{\alpha,\beta} F_{\alpha,1}$ and $F_{\alpha,1}F_{\beta,2}=q^2\delta_{\alpha,\beta} F_{\alpha,2}$.  
\item $F_{\alpha,1}F_{\beta,3}=F_{\alpha,2}F_{\beta,3}=q^2\delta_{\alpha,0}\delta_{\beta,0}F_{0,3}$ and $F_{\alpha,3}F_{\beta,3}=q^3 \delta_{\alpha,\beta}F_{\alpha,0}+
q^{2}\delta_{\alpha,0}\delta_{\beta,0}(q (-F_{0,0}+F_{0,1}+F_{0,2})
+(q-2)F_{0,3})$. 
\end{enumerate}
\end{lemma}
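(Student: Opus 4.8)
The statement to prove is Lemma~\ref{lem:f}, which computes products $F_{\alpha,i}F_{\beta,l}$ of the character-weighted sums $F_{\alpha,i}=\sum_{\gamma\in\mathbb{F}_q}\chi_\alpha(\gamma)A_{\gamma,i}$. The plan is to reduce every such product to the already-established intersection relations of Proposition~\ref{prop:in} by expanding both factors as sums over $\mathbb{F}_q$ and then collapsing the resulting double sum using the orthogonality of characters on $\mathbb{Z}_2^m$. Concretely, I would write
\begin{align*}
F_{\alpha,i}F_{\beta,l}=\sum_{\gamma,\delta\in\mathbb{F}_q}\chi_\alpha(\gamma)\chi_\beta(\delta)\,A_{\gamma,i}A_{\delta,l},
\end{align*}
substitute the value of $A_{\gamma,i}A_{\delta,l}$ from Proposition~\ref{prop:in}, and re-index so that the character factors combine into a single character evaluated at a sum, which then either telescopes to another $F$-matrix or forces a Kronecker delta.

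The main mechanism is the following. Whenever Proposition~\ref{prop:in} gives $A_{\gamma,i}A_{\delta,l}=c\,A_{\gamma+\delta,\,k}$ for a single output index (as in parts (i), (ii), and the $A_{\alpha,3}A_{\beta,3}$ term $q^2A_{\alpha+\beta,0}$), I would set $\epsilon=\gamma+\delta$ and use $\chi_\alpha(\gamma)\chi_\beta(\delta)=\chi_\alpha(\gamma)\chi_\beta(\epsilon-\gamma)=\chi_{\alpha-\beta}(\gamma)\chi_\beta(\epsilon)$; summing over $\gamma$ gives $\sum_\gamma\chi_{\alpha-\beta}(\gamma)=q\,\delta_{\alpha,\beta}$ by Schur orthogonality (recall $\chi$ is a group homomorphism and $\alpha-\beta=\alpha+\beta$ in characteristic two), leaving $q\delta_{\alpha,\beta}\sum_\epsilon\chi_\beta(\epsilon)A_{\epsilon,k}=q\delta_{\alpha,\beta}F_{\beta,k}$. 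This single computation, applied term by term, yields essentially all the stated identities; the factor-of-$q$ bookkeeping (e.g.\ $q\cdot q=q^2$ in part (ii), and $q\cdot q^2=q^3$ in the leading term of part (iii)) follows by tracking the scalar $c$ through the same orthogonality step.

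The qualitatively different case is any product in Proposition~\ref{prop:in} whose right-hand side is a \emph{full} sum $\sum_{\gamma\in\mathbb{F}_q}(\cdots)$ independent of the input indices—namely $A_{\alpha,1}A_{\beta,3}=A_{\alpha,2}A_{\beta,3}=\sum_{\gamma}A_{\gamma,3}$ and the tail of $A_{\alpha,3}A_{\beta,3}$. Here the output does not depend on $\gamma,\delta$, so the character sum factors completely: $\sum_{\gamma,\delta}\chi_\alpha(\gamma)\chi_\beta(\delta)$ equals $\bigl(\sum_\gamma\chi_\alpha(\gamma)\bigr)\bigl(\sum_\delta\chi_\beta(\delta)\bigr)=q^2\delta_{\alpha,0}\delta_{\beta,0}$. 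This is exactly what produces the double Kronecker deltas $\delta_{\alpha,0}\delta_{\beta,0}$ in parts (iii), and it explains why those product terms survive only when $\alpha=\beta=0$; note that $F_{0,k}=\sum_\gamma A_{\gamma,k}$ recovers the plain sum, so $q^2\delta_{\alpha,0}\delta_{\beta,0}\sum_\gamma A_{\gamma,3}=q^2\delta_{\alpha,0}\delta_{\beta,0}F_{0,3}$ and similarly for the $(q-2)$-term.

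I anticipate the only real obstacle to be the final identity $F_{\alpha,3}F_{\beta,3}$, where the right-hand side of $A_{\alpha,3}A_{\beta,3}$ in Proposition~\ref{prop:in} is a sum of two parts of different character: the single-index part $q^2A_{\alpha+\beta,0}$ (handled by the first mechanism, giving $q^3\delta_{\alpha,\beta}F_{\alpha,0}$) and the index-independent tail $\sum_{\gamma}(q(-A_{\gamma,0}+A_{\gamma,1}+A_{\gamma,2})+(q-2)A_{\gamma,3})$ (handled by the second mechanism, giving the $q^2\delta_{\alpha,0}\delta_{\beta,0}$ term). The care required is simply to apply the two mechanisms to the two parts separately and add; once the orthogonality relation $\sum_{\gamma\in\mathbb{F}_q}\chi_{\alpha+\beta}(\gamma)=q\delta_{\alpha,\beta}$ is invoked cleanly, the matching of scalars and of the $F_{0,0},F_{0,1},F_{0,2},F_{0,3}$ terms against the claimed formula is routine.
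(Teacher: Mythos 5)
Your overall strategy is exactly the paper's: the paper also expands $F_{\alpha,i}F_{\beta,l}$, substitutes the intersection relations of Proposition~\ref{prop:in}, and collapses the double sum by Schur orthogonality, writing out only the $F_{\alpha,3}F_{\beta,3}$ case in full. Your two mechanisms (re-indexing $\epsilon=\gamma+\delta$ to produce $q\delta_{\alpha,\beta}$ when the output index is $\gamma+\delta$; complete factorization to $q^2\delta_{\alpha,0}\delta_{\beta,0}$ when the output is index-independent) are the right ones and correctly handle part (ii), part (iii), and part (i) for $l\in\{0,3\}$.

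There is, however, one sub-case that falls through the cracks of your dichotomy, and you should not sweep it into ``parts (i), (ii)'' as you do. For $l\in\{1,2\}$ the product $A_{\gamma,0}A_{\delta,l}$ is \emph{not} of the form $c\,A_{\gamma+\delta,k}$: since $A_{\gamma,0}=I_{q(q+2)}\otimes\phi(\gamma)$ is a permutation matrix and $C_{y,\delta}=\phi(\delta)\otimes J_q$, one has $(I_q\otimes\phi(\gamma))C_{y,\delta}=C_{y,\delta}$, so $A_{\gamma,0}A_{\delta,l}=A_{\delta,l}$, depending on $\delta$ alone. Neither of your two mechanisms applies; the character sum over $\gamma$ simply factors out and gives
\begin{align*}
F_{\alpha,0}F_{\beta,l}=\Bigl(\sum_{\gamma\in\mathbb{F}_q}\chi_\alpha(\gamma)\Bigr)F_{\beta,l}=q\,\delta_{\alpha,0}F_{\beta,l}\qquad (l\in\{1,2\}),
\end{align*}
which is not the same as the printed $q\delta_{\alpha,\beta}F_{\alpha,l}$ (take $\alpha=0$, $\beta\neq0$). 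So if you run your first mechanism on this case you prove something false, and if you compute it correctly you do not land on the statement as printed; you need to treat this sub-case separately and record the $\delta_{\alpha,0}$ form (which is also what the later orthogonality of the idempotents $E_{\beta,3}E_{\beta',1}=0$ actually requires). Relatedly, be careful with the scalar bookkeeping in part (i): Proposition~\ref{prop:in}(i) as printed carries a factor $q$, but the appendix computation shows $A_{\alpha,0}A_{\beta,0}=A_{\alpha+\beta,0}$ and $A_{\alpha,0}A_{\beta,3}=A_{\alpha+\beta,3}$ with no such factor (as must be the case, $A_{\alpha,0}$ having valency $1$); only with these versions does your re-indexing produce the stated $q\delta_{\alpha,\beta}F_{\alpha,l}$ for $l\in\{0,3\}$ and the leading term $q^3\delta_{\alpha,\beta}F_{\alpha,0}$ of $F_{\alpha,3}F_{\beta,3}$.
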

\begin{proof}
(i) for $l=0$ and (ii) for $F_{\alpha,1}F_{\beta,1}$ follow from the fact that for $i\in\{0,1\}$, the group $\{A_{\alpha,i}\mid \alpha\in\mathbb{F}_q\}$ under the matrix multiplication is isomorphic to $\mathbb{Z}_2^m$ and the Schur orthogonality. 
We prove only the case of $F_{\alpha,3}F_{\beta,3}$. 
The other cases can be proved in a similar manner.  
\begin{align*}
F_{\alpha,3}F_{\beta,3}
&=\sum_{\gamma,\gamma'\in\mathbb{F}_q} \chi_{\alpha}(\gamma)\chi_{\beta}(\gamma')A_{\gamma,3}A_{\gamma',3}\\
&=\sum_{\gamma,\gamma'\in\mathbb{F}_q}\chi_{\alpha}(\gamma)\chi_{\beta}(\gamma')(q^2A_{\gamma+\gamma',0}+q \sum_{\gamma''\in\mathbb{F}_q}(-A_{\gamma'',0}+A_{\gamma'',1}+A_{\gamma'',2})+(q-2)\sum_{\gamma''\in\mathbb{F}_q} A_{\gamma'',3})\\
&=q^2 F_{\alpha,0}F_{\beta,0}+
\sum_{\gamma,\gamma'\in\mathbb{F}_q}\chi_{\alpha}(\gamma)\chi_{\beta}(\gamma')(q (-F_{0,0}+F_{0,1}+F_{0,2})+(q-2)F_{0,3})\\
&=q^3 \delta_{\alpha,\beta}F_{\alpha,0}+
q^{2}\delta_{\alpha,0}\delta_{\beta,0}(q (-F_{0,0}+F_{0,1}+F_{0,2})+(q-2)F_{0,3}). \qedhere
\end{align*}
\end{proof}

Let $E_{i},E_{\alpha,j}$ be 
\begin{align*}
E_0&=\frac{1}{(q+2)q^2}(F_{0,1}+F_{0,2}+F_{0,3}),\\
E_1&=\frac{1}{(q+2)q^2}(\frac{q}{2}(F_{0,1}+F_{0,2})-F_{0,3}),\\
E_{\alpha,1}&=\frac{1}{2q^2}(F_{\alpha,1}+F_{\alpha,2}), \quad \alpha\in\mathbb{F}_q,\\
E_{\alpha,2}&=\frac{1}{2q^2}(F_{\alpha,1}-F_{\alpha,2}), \quad \alpha\in\mathbb{F}_q,\\
E_{\alpha,3}&=\frac{1}{2q^2}(q F_{\alpha,0}+F_{\alpha,3}), \quad \alpha\in\mathbb{F}_q,\\
E_{\alpha,4}&=\frac{1}{2q^2}(q F_{\alpha,0}-F_{\alpha,3}), \quad \alpha\in\mathbb{F}_q.
\end{align*}
Note that $E_{0,1}=E_0+E_1$, $E_{0,3}=(\frac{q}{2}+1)E_{0}+\sum_{\gamma\in\mathbb{F}_q^*}E_{\gamma,1}$ and $E_{0,4}=(-\frac{q}{2}+1)E_0+2E_1+\sum_{\gamma\in\mathbb{F}_q^*}E_{\gamma,1}$. 
From Lemma~\ref{lem:f}, the following is readily obtained. 
\begin{theorem}\label{thm:pi}
The matrices $E_0,E_1,E_{\beta,1},E_{\alpha,2},E_{\beta,3},E_{\beta,4}$, $\alpha\in\mathbb{F}_q,\beta\in\mathbb{F}_q^*$, are the primitive idempotents of the symmetric association scheme. 
\end{theorem}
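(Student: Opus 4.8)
The plan is to verify that the matrices listed in Theorem~\ref{thm:pi} form the complete set of primitive idempotents by checking the three defining properties: each is idempotent, any two distinct ones are orthogonal, and they sum to the identity $I_{(q+2)q^2}$. Since every $E$ in the list is built as a real linear combination of the $F_{\alpha,i}$, which in turn lie in the Bose-Mesner algebra $\mathcal{A}$, all these matrices belong to $\mathcal{A}$; moreover each is symmetric because the $A_{\gamma,i}$ are symmetric and the characters $\chi_\alpha(\gamma)$ are real ($\pm 1$). So the entire computation reduces to manipulating the multiplication rules for the $F_{\alpha,i}$ supplied by Lemma~\ref{lem:f}.

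First I would compute the pairwise products among $E_{\alpha,1},E_{\alpha,2},E_{\alpha,3},E_{\alpha,4}$ for a fixed $\alpha\in\mathbb{F}_q$ using Lemma~\ref{lem:f}(ii),(iii). The key feature is that for $\alpha\neq 0$ (and likewise for distinct $\alpha\neq\beta$) the Kronecker deltas $\delta_{\alpha,0}$ and $\delta_{\alpha,\beta}$ kill the cross terms, so that, for example, $E_{\alpha,1}^2 = \frac{1}{4q^4}(F_{\alpha,1}+F_{\alpha,2})^2 = \frac{1}{4q^4}\cdot 2q^2(F_{\alpha,1}+F_{\alpha,2}) = E_{\alpha,1}$, while $E_{\alpha,1}E_{\alpha,2}$ vanishes because the symmetric and antisymmetric combinations of $F_{\alpha,1},F_{\alpha,2}$ are orthogonal. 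Similarly $E_{\alpha,3},E_{\alpha,4}$ arise as the $\pm$ eigen-splitting of $F_{\alpha,0},F_{\alpha,3}$, using $F_{\alpha,0}^2=qF_{\alpha,0}$, $F_{\alpha,3}^2=q^3 F_{\alpha,0}+(\text{terms supported at }\alpha=0)$, and $F_{\alpha,0}F_{\alpha,3}=qF_{\alpha,3}$ from Lemma~\ref{lem:f}(i). For $\alpha\in\mathbb{F}_q^*$ these terms simplify cleanly and yield idempotency and mutual orthogonality. Products with different first index $\alpha\neq\beta$ vanish directly from the $\delta_{\alpha,\beta}$ factors.

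The main obstacle is the index $\alpha=0$, where the Kronecker deltas do \emph{not} kill the extra terms and the naive splitting degenerates: at $\alpha=0$ the matrices $E_{0,1},E_{0,3},E_{0,4}$ are not themselves idempotent but decompose further, which is exactly why the statement replaces them by $E_0,E_1$ together with the surviving pieces. I would handle this by using the stated identities $E_{0,1}=E_0+E_1$, $E_{0,3}=(\tfrac{q}{2}+1)E_0+\sum_{\gamma\in\mathbb{F}_q^*}E_{\gamma,1}$, and $E_{0,4}=(-\tfrac{q}{2}+1)E_0+2E_1+\sum_{\gamma\in\mathbb{F}_q^*}E_{\gamma,1}$ to route all $\alpha=0$ contributions through $E_0$ and $E_1$; I would then verify $E_0^2=E_0$, $E_1^2=E_1$, and $E_0E_1=O$ directly from the definitions, using Lemma~\ref{lem:f}(ii),(iii) at $\alpha=\beta=0$ (where $F_{0,1}^2=q^2F_{0,1}$, $F_{0,3}^2=q^3F_{0,0}+q^2(q(-F_{0,0}+F_{0,1}+F_{0,2})+(q-2)F_{0,3})$, etc.). The factor of $\tfrac{q}{2}$ in the definition of $E_1$ is what makes $E_0$ and $E_1$ orthogonal idempotents.

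Finally I would confirm completeness by checking $\sum E = I_{(q+2)q^2}$. Since $A_{0,0}=I_{(q+2)q^2}$ and $F_{0,0}=\sum_{\gamma}A_{\gamma,0}$, the identity is captured within the algebra, and summing the idempotents should collapse (via Schur orthogonality of the characters, $\sum_{\alpha}\chi_\alpha(\gamma)=q\delta_{\gamma,0}$) to $F_{0,0}/q$-type expressions that reassemble $I$. The count of idempotents is $2 + (q-1) + q + (q-1) + (q-1) = 4q-1 = 2^{m+2}-1$, matching the class $2^{m+2}-2$ announced in the abstract, which provides an independent consistency check that no idempotent is missing or redundant. Rather than verify every product in isolation, I would organize the computation by the first index, treating the $\alpha=0$ block and the $\alpha\in\mathbb{F}_q^*$ blocks separately, since Lemma~\ref{lem:f} already makes cross-block products vanish.
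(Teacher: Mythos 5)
Your proposal is correct and follows essentially the same route as the paper, whose proof of Theorem~\ref{thm:pi} consists precisely of the remark that the idempotency, mutual orthogonality, and completeness of these matrices are ``readily obtained'' from Lemma~\ref{lem:f}; you simply carry out that verification, organized by the first index, with the $\alpha=0$ block handled via $E_0,E_1$ exactly as the paper's definitions intend. Your dimension count $4q-1=2^{m+2}-1$ matching the number of adjacency matrices is the right way to conclude that a complete orthogonal family of this size in the Bose--Mesner algebra must consist of the primitive idempotents.
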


As a direct consequence of Theorem~\ref{thm:pi} and Lemma~\ref{lem:eigen}, the eigenmatrices of the symmetric association scheme are determined as follows. 
\begin{theorem}\label{thm:eigen}
The first and second eigenmatrices $P,Q$  of the symmetric association scheme are given as follows:
\begin{align*}
P=\bordermatrix{
                   & A_{\alpha,0}              & A_{\beta,1}                  & A_{\alpha,2} & A_{\alpha,3} \cr
E_0              & 1                            & q                               & q & q^2 \cr
E_1              & 1                            & q                               & q & -2q \cr
E_{\beta',1}    & 1                            & q\chi_{\beta'}(\beta)     & q\chi_{\beta'}(\alpha)  & 0  \cr
E_{\alpha',2}   & 1                            & q\chi_{\alpha'}(\beta)   & -q\chi_{\alpha'}(\alpha) & 0 \cr
E_{\beta',3}    & \chi_{\beta'}(\alpha)    & 0                             & 0  &  q\chi_{\beta'}(\alpha) \cr
E_{\beta',4}    & \chi_{\beta'}(\alpha)    & 0                             & 0  & -q\chi_{\beta'}(\alpha) \cr
},
\end{align*}
\begin{align*}
Q=\bordermatrix{
              & E_0 & E_1 & E_{\beta',1}                          & E_{\alpha',2}                       & E_{\beta',3} & E_{\beta',4} \cr
A_{\alpha,0}          & 1    & \frac{q}{2} & \frac{q}{2}+1                        & \frac{q}{2}+1                          & (\frac{q}{2}+1)q\chi_{\beta'}(\alpha) & (\frac{q}{2}+1)q\chi_{\beta'}(\alpha) \cr
A_{\beta,1}  & 1    & \frac{q}{2} & (\frac{q}{2}+1)\chi_{\beta'}(\beta) & (\frac{q}{2}+1)\chi_{\alpha'}(\beta)   & 0 & 0  \cr
A_{\alpha,2} & 1   & \frac{q}{2} &(\frac{q}{2}+1)\chi_{\beta'}(\alpha)  & -(\frac{q}{2}+1)\chi_{\alpha'}(\alpha)  & 0 & 0 \cr
A_{\alpha,3} & 1   & -1   & 0                           & 0 & (\frac{q}{2}+1)\chi_{\beta'}(\alpha) & -(\frac{q}{2}+1)\chi_{\beta'}(\alpha) \cr
},
\end{align*}
where $\alpha,\alpha'\in\mathbb{F}_q,\beta,\beta'\in\mathbb{F}_q^*$. 
\end{theorem}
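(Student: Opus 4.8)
The plan is to extract the first eigenmatrix $P$ from the action of the adjacency matrices on the primitive idempotents produced in Theorem~\ref{thm:pi}, and then to pass to the second eigenmatrix $Q$ by the duality of Lemma~\ref{lem:eigen}. First I would invert the Fourier-type definition of the $F_{\alpha,i}$: by Schur orthogonality of the characters $\chi_\alpha$ of $\mathbb{Z}_2^m$ one has
\[
A_{\gamma,i}=\frac{1}{q}\sum_{\alpha\in\mathbb{F}_q}\chi_\alpha(\gamma)\,F_{\alpha,i},\qquad i\in\{0,1,2,3\}.
\]
Every idempotent in Theorem~\ref{thm:pi} lies in the span of the matrices $F_{\alpha,\bullet}$ attached to a single first index $\alpha$ (with $E_0,E_1$ coming from $\alpha=0$). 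Because Lemma~\ref{lem:f} forces $F_{\alpha,i}F_{\beta,j}$ to vanish unless the first indices match in the prescribed way, multiplying $A_{\gamma,i}$ by such an idempotent $E$ annihilates all Fourier components except the matching one, so that $A_{\gamma,i}E=\tfrac1q\chi_\alpha(\gamma)\,F_{\alpha,i}E$. Lemma~\ref{lem:f} then evaluates $F_{\alpha,i}E$ as an explicit scalar times $E$, and that scalar, multiplied by $\tfrac1q\chi_\alpha(\gamma)$, is the entry of $P$ in the row of $E$ and the column of $A_{\gamma,i}$. For example $F_{\beta',3}E_{\beta',3}=q^2E_{\beta',3}$ yields the eigenvalue $q\chi_{\beta'}(\alpha)$ of $A_{\alpha,3}$ on $E_{\beta',3}$, and $F_{\beta',1}E_{\beta',1}=q^2E_{\beta',1}$ yields $q\chi_{\beta'}(\beta)$ for $A_{\beta,1}$.

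The exceptional first index $\alpha=0$ has to be handled on its own, since there the products in Lemma~\ref{lem:f}(iii) degenerate and the candidate idempotent $E_{0,1}$ splits as $E_0+E_1$. Here I would use the stated identities $E_{0,1}=E_0+E_1$, $E_{0,3}=(\tfrac q2+1)E_0+\sum_{\gamma\in\mathbb{F}_q^*}E_{\gamma,1}$ and $E_{0,4}=(-\tfrac q2+1)E_0+2E_1+\sum_{\gamma\in\mathbb{F}_q^*}E_{\gamma,1}$ to separate $E_0$ and $E_1$, computing, e.g., $F_{0,3}E_1=-2q^2E_1$ to recover the entry $-2q$ of $A_{\alpha,3}$ on $E_1$. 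The $E_0$-row of $P$ is just the list of valencies $1,q,q,q^2$ coming from $E_0=\tfrac1vJ_v$.

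To obtain $Q$ I would first record the multiplicities $m=\operatorname{rank}(E)=\operatorname{tr}(E)$ by tracing the defining combinations of the $F_{\alpha,i}$, using $\operatorname{tr}(F_{\alpha,i})=\sum_{\gamma}\chi_\alpha(\gamma)\operatorname{tr}(A_{\gamma,i})$. Since $\phi(\gamma)$ is traceless for $\gamma\neq0$, while $P_y$ and every $P_a$ with $a\in\mathbb{F}_q$ have zero diagonal because $L$ has constant diagonal $x$, the only surviving diagonal contributions come from $\gamma=0$ with $i\in\{0,1\}$; hence $\operatorname{tr}(F_{\alpha,i})=v$ for $i\in\{0,1\}$ and $0$ for $i\in\{2,3\}$. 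This gives $m_{E_0}=1$, $m_{E_1}=\tfrac q2$, $m_{E_{\beta',1}}=m_{E_{\alpha',2}}=\tfrac q2+1$ and $m_{E_{\beta',3}}=m_{E_{\beta',4}}=(\tfrac q2+1)q$, which one checks sum to $v=(q+2)q^2$. Finally Lemma~\ref{lem:eigen}, written entrywise as $q_{ji}=(m_i/k_j)p_{ij}$, converts $P$ into $Q$ directly.

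I expect the main obstacle to be the organized case analysis behind the first step: one must run through all pairs $(i,j)$ of matrix types, keep the correct character factor ($\chi_{\beta'}(\alpha)$, $\chi_{\alpha'}(\beta)$, and so on) in every cell, and above all treat the degenerate $\alpha=0$ sub-block, where the products of the $F_{0,i}$ pick up the extra terms of Lemma~\ref{lem:f}(iii) and force the split into the two idempotents $E_0,E_1$ of unequal multiplicity. Once these products are tabulated, both eigenmatrices follow by routine substitution and the multiplicity scaling.
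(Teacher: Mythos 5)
Your argument is correct, but you run the derivation in the opposite direction from the paper. The paper's proof is essentially a one-liner: since each primitive idempotent in Theorem~\ref{thm:pi} is already written as an explicit linear combination of the $F_{\alpha,i}=\sum_\gamma\chi_\alpha(\gamma)A_{\gamma,i}$, the coefficients of the $A_{\gamma,i}$ (times $v$) \emph{are} the entries of $Q$, the multiplicities $m_j=q_{0j}$ come for free from the $A_{0,0}$-row, and $P$ then follows from Lemma~\ref{lem:eigen} together with the valencies $1,q,q,q^2$. You instead compute $P$ first by Fourier inversion $A_{\gamma,i}=\frac1q\sum_\alpha\chi_\alpha(\gamma)F_{\alpha,i}$ and the product rules of Lemma~\ref{lem:f} (your sample evaluations, e.g.\ $F_{\beta',3}E_{\beta',3}=q^2E_{\beta',3}$ giving the eigenvalue $q\chi_{\beta'}(\alpha)$, and $F_{0,3}E_1=-2q^2E_1$ giving $-2q$, all check out), then obtain the multiplicities by a separate trace computation (also correct: only $F_{\alpha,0}$ and $F_{\alpha,1}$ have nonzero trace $v$, since $\phi(\gamma)$ is traceless for $\gamma\neq0$ and the off-diagonal $P_a$ have zero diagonal), and finally pass to $Q$ via $q_{ji}=m_ip_{ij}/k_j$. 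Both routes rest on the same two ingredients (Theorem~\ref{thm:pi} and Lemma~\ref{lem:eigen}); the paper's order is cheaper because $Q$ requires no computation at all, whereas yours costs the extra case analysis and the trace argument but produces $P$ independently, which in effect gives a consistency check on Theorem~\ref{thm:pi}.
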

\begin{proof}
The formula for the second eigenmatrix $Q$ is derived from Theorem~\ref{thm:pi}. 
Since the valencies of $A_{\alpha,0},A_{\beta,1},A_{\alpha,2},A_{\alpha,3}$ are respectively $1,q,q,q^2$,  
the formula for the first eigenmatrix $P$ follows from Lemma~\ref{lem:eigen}. 
\end{proof}
As a corollary of the formula for the first eigenmatrix $P$, we obtain several commuting strongly regular graphs.
In order to describe the eigenamtrix, we need the notion of the permutation automorphism group of Sylvester-type Hadamard matrices. 
Define the \defn{permutation automorphism group} of the Sylvester-type Hadamard matrix $K=(\chi_{\beta}(\alpha))_{\alpha,\beta\in\mathbb{F}_q}$ by 
\begin{align*}
\mathrm{PAut}(K)=\{(\sigma,\tau)\in S(\mathbb{F}_q)\times S(\mathbb{F}_q) \mid \chi_{\tau(\beta)}(\sigma(\alpha))=\chi_{\beta}(\alpha) \text{ for any }\alpha,\beta\in\mathbb{F}_q\}, 
\end{align*}
where $S(\mathbb{F}_q)$ denotes the set of all permutations on the set $\mathbb{F}_q$.  
Note that for $(\sigma,\tau)\in\mathrm{PAut}(K)$, $\sigma(0)=\tau(0)=0$. 

\begin{corollary}
For any $\sigma\in S(\mathbb{F}_q)$, 
the set of matrices $\{A_{0,0}, \sum_{\alpha\in\mathbb{F}_q^*}(A_{\alpha,0}+A_{\alpha,1}), A_{\alpha,2}+A_{\sigma(\alpha),3} \mid \alpha\in\mathbb{F}_q\}$ is a commutative strongly regular decomposition. 

If there exists $\tau\in S(\mathbb{F}_q)$ such that $(\sigma,\tau)\in\text{PAut}(K)$, 
then the eigenmatrix is given by 
\begin{align*}
P=\bordermatrix{
              & A_{0,0}  & \sum_{\alpha\in\mathbb{F}_q^*}(A_{\alpha,0}+A_{\alpha,1})                                                & A_{\alpha,2}+A_{\sigma(\alpha),3} \cr
E_0         & 1     & q^2-1                                                 & q^2+q \cr
E_1+E_{0,2}         & 1     & q^2-1                                                 & -q \cr
E_{\beta',1}+E_{\tau(\beta'),3}          & 1     &  -1                       & q\chi_{\beta'}(\alpha)   \cr
E_{\beta',2}+E_{\tau(\beta'),4}     & 1     &  -1                            & -q\chi_{\beta'}(\alpha) \cr
}.   
\end{align*}

If there does not exist $\tau\in S(\mathbb{F}_q)$ such that $(\sigma,\tau)\in \text{PAut}(K)$, 
then the eigenmatrix is given by 
\begin{align*}
P=\bordermatrix{
              & A_{0,0}  & \sum_{\alpha\in\mathbb{F}_q^*}(A_{\alpha,0}+A_{\alpha,1})                                                & A_{\alpha,2}+A_{\sigma(\alpha),3} \cr
E_0         & 1     & q^2-1                                                 & q^2+q \cr
E_1+E_{0,2}         & 1     & q^2-1                                                 & -q \cr
E_{\beta',1}         & 1     &  -1                            & q\chi_{\beta'}(\alpha)   \cr
E_{\beta',2}     & 1     &  -1                            & -q\chi_{\beta'}(\alpha) \cr
E_{\beta',3}  & 1     & -1                            &   q\chi_{\beta'}(\sigma(\alpha)) \cr
E_{\beta',4} & 1    & -1                             &  -q\chi_{\beta'}(\sigma(\alpha)) \cr
}.
\end{align*}
In both cases above, $\alpha,\alpha'\in\mathbb{F}_q$ and $\beta,\beta'\in\mathbb{F}_q^*$.
\end{corollary}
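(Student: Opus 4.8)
The plan is to obtain everything from the first eigenmatrix $P$ of Theorem~\ref{thm:eigen}, since the three types of matrices in the statement are real linear combinations of the $A_{\gamma,i}$ and hence lie in the Bose--Mesner algebra $\mathcal{A}$ of the scheme of Theorem~\ref{thm:as}. Abbreviate $B_0=A_{0,0}=I$, $B_1=\sum_{\alpha\in\mathbb{F}_q^*}(A_{\alpha,0}+A_{\alpha,1})$ and $B_{2,\alpha}=A_{\alpha,2}+A_{\sigma(\alpha),3}$ for $\alpha\in\mathbb{F}_q$. First I would verify the commutative-decomposition axioms: $B_0=I$; each $B_i$ is symmetric and any two commute because they all lie in the commutative algebra $\mathcal{A}$; and $B_0+B_1+\sum_{\alpha\in\mathbb{F}_q}B_{2,\alpha}=J$. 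The last identity follows from the partition $\sum_{\alpha\in\mathbb{F}_q}(A_{\alpha,0}+A_{\alpha,2}+A_{\alpha,3})+\sum_{\alpha\in\mathbb{F}_q^*}A_{\alpha,1}=J$ of Theorem~\ref{thm:as}, together with the fact that $\sigma$ is a bijection, so that $\sum_{\alpha\in\mathbb{F}_q}A_{\sigma(\alpha),3}=\sum_{\gamma\in\mathbb{F}_q}A_{\gamma,3}$.

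Next I would establish strong regularity by reading the eigenvalues of $B_1$ and $B_{2,\alpha}$ off the rows of $P$, using the character identity $\sum_{\alpha\in\mathbb{F}_q^*}\chi_{\beta'}(\alpha)=-1$ for $\beta'\neq0$. One finds that $B_1$ takes the value $q^2-1$ on $E_0,E_1,E_{0,2}$ and $-1$ on every other primitive idempotent; a regular graph with only the two eigenvalues $q^2-1$ and $-1$ is a disjoint union of cliques, so $B_1$ is strongly regular with parameters $((q+2)q^2,q^2-1,q^2-2,0)$. Likewise $B_{2,\alpha}$ takes the value $q^2+q$ on $E_0$, the value $-q$ on $E_1$ and $E_{0,2}$, and one of $q\chi_{\beta'}(\alpha),-q\chi_{\beta'}(\alpha),q\chi_{\beta'}(\sigma(\alpha)),-q\chi_{\beta'}(\sigma(\alpha))\in\{q,-q\}$ on the remaining idempotents; since the valency $q^2+q$ is simple, $B_{2,\alpha}$ is a connected graph with exactly the three eigenvalues $q^2+q,q,-q$, hence strongly regular with parameters $((q+2)q^2,q^2+q,q,q)$. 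This already proves the first assertion for every $\sigma$.

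For the eigenmatrix I would determine the maximal common eigenspaces of $\{B_0,B_1,B_{2,\alpha}\}$ as fusions of the idempotents of Theorem~\ref{thm:pi}: two of them fuse exactly when $B_1$ and all the $B_{2,\alpha}$ agree on them. The computation above shows that $E_0$ stays alone, that $E_1$ and $E_{0,2}$ always fuse (both giving the row $(1,q^2-1,-q)$), and that among the remaining idempotents, all carrying $B_1$-eigenvalue $-1$, the value at $\alpha=0$ forbids fusion between opposite signs, so the only candidates are $E_{\beta',1}$ with some $E_{\gamma,3}$ and $E_{\beta',2}$ with some $E_{\gamma,4}$. The fusion of $E_{\beta',1}$ with $E_{\gamma,3}$ requires $q\chi_{\beta'}(\alpha)=q\chi_{\gamma}(\sigma(\alpha))$ for all $\alpha$, i.e. $\chi_{\gamma}(\sigma(\alpha))=\chi_{\beta'}(\alpha)$. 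If $(\sigma,\tau)\in\mathrm{PAut}(K)$ then $\chi_{\tau(\beta')}(\sigma(\alpha))=\chi_{\beta'}(\alpha)$, so $E_{\beta',1}$ fuses with $E_{\tau(\beta'),3}$ and $E_{\beta',2}$ with $E_{\tau(\beta'),4}$, producing the first table; otherwise the idempotents stay separate and one obtains the second table. In either case the entries of $P$ are the eigenvalues already computed, recorded on the fused spaces, and the resulting count of eigenspaces satisfies van Dam's inequality $t\geq n$ strictly, as it must for a proper strongly regular decomposition.

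The step I expect to be the main obstacle is this fusion analysis, and specifically the justification of the clean two-case split. Its core is the characterization of $\mathrm{PAut}(K)$ for the Sylvester-type Hadamard matrix: one must show that an element $(\sigma,\tau)$ with $\sigma(0)=\tau(0)=0$ forces $\sigma$ to be $\mathbb{F}_2$-linear with $\tau=(\sigma^{\top})^{-1}$, and that $\alpha\mapsto\chi_{\gamma}(\sigma(\alpha))$ is a character for every $\gamma$ precisely when such a global $\tau$ exists. I would approach this by checking that $\{\gamma\in\mathbb{F}_q:\alpha\mapsto\chi_{\gamma}(\sigma(\alpha))\text{ is a character}\}$ is a subgroup of $\mathbb{F}_q$ whose orthogonal complement is spanned by $\sigma(0)$ and the defects $\sigma(\alpha+\alpha')+\sigma(\alpha)+\sigma(\alpha')$, this subgroup being all of $\mathbb{F}_q$ if and only if $\sigma$ is linear. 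The delicate point, on which the dichotomy genuinely rests, is to prove that fusion is all-or-nothing, namely that partial coincidences among these characters cannot occur, or else to delimit precisely the $\sigma$ for which this can fail; this, together with ruling out cross-type fusions and handling $\sigma(0)\neq0$ by evaluating the character identities at $\alpha=0$, is where I would concentrate the effort.
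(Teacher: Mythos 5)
Your route is exactly the one the paper intends: the corollary is stated without proof, as a direct read-off from the first eigenmatrix of Theorem~\ref{thm:eigen}, and your eigenvalue computations (using $\sum_{\alpha\in\mathbb{F}_q^*}\chi_{\beta'}(\alpha)=-1$, the fusion $E_1$ with $E_{0,2}$, and the spectrum arguments for strong regularity of $\sum_{\alpha\in\mathbb{F}_q^*}(A_{\alpha,0}+A_{\alpha,1})$ and $A_{\alpha,2}+A_{\sigma(\alpha),3}$) are all correct. The reservation you raise in your last paragraph is not a defect of your argument but a genuine gap in the statement itself, and you are right to flag it: fusion is \emph{not} all-or-nothing as the two-case split suggests. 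Besides the fusions $E_{\beta',1}$ with $E_{\gamma,3}$ (requiring $\chi_{\gamma}\circ\sigma=\chi_{\beta'}$), one must also rule out $E_{\beta',1}$ with $E_{\gamma,4}$, which requires $\chi_{\gamma}\circ\sigma=-\chi_{\beta'}$ and is possible when $\sigma(0)\neq 0$. Concretely, for $q=2$ and $\sigma$ the transposition $\alpha\mapsto\alpha+1$ one has $\chi_{1}(\sigma(\alpha))=-\chi_{1}(\alpha)$, so the rows labelled $E_{1,1}$ and $E_{1,4}$ (and likewise $E_{1,2}$ and $E_{1,3}$) in the second displayed matrix coincide, even though no $\tau$ with $(\sigma,\tau)\in\mathrm{PAut}(K)$ exists; the second table is then not the eigenmatrix in the paper's sense of maximal common eigenspaces. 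So your plan proves the first assertion and the first table completely, and for the second table the honest conclusion is that the stated dichotomy needs either an extra hypothesis (e.g.\ $\sigma(0)=0$ together with a signed version of $\mathrm{PAut}(K)$) or a finer case analysis of which characters $\chi_{\beta'}$ satisfy $\chi_{\gamma}\circ\sigma=\pm\chi_{\beta'}$ for some $\gamma$ --- exactly the subgroup analysis you sketch.
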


\noindent {\bf Acknowledgments.}
Hadi Kharaghani is supported by an NSERC Discovery Grant.  Sho Suda is supported by JSPS KAKENHI Grant Number 15K21075.


\appendix
\def\thesection{\Alph{section}}

\section{Intersection numbers} \label{appendix}
We calculate the intersection numbers of the symmetric association schemes. Let $\alpha,\beta\in\mathbb{F}_q$.  

Proposition~\ref{prop:in}(i): 
\begin{align*}
A_{\alpha,0}A_{\beta,0}&=I_{q(q+2)}\otimes \phi(\alpha+\beta)=A_{\alpha+\beta,0}. \\
A_{\alpha,0}A_{\beta,1}&=I_{q+2}\otimes ((I_q\otimes \phi(\alpha))C_{y,\beta})=I_{q+2}\otimes C_{y,\beta}=A_{\beta,1}.\\
A_{\alpha,0}A_{\beta,2}&=P_{y}\otimes ((I_q\otimes \phi(\alpha))C_{y,\beta})=P_{y}\otimes C_{y,\beta}=A_{\beta,2}.\\
A_{\alpha,0}A_{\beta,3}&=(I_{q+2}\otimes (I_q\otimes \phi(\alpha))(N_{\beta}-P_{y}\otimes C_{y,\beta})\\
&=(I_{q+2}\otimes (I_q\otimes \phi(\alpha))((C_{L(a,a'),\beta})_{a,a'\in S}-P_{y}\otimes C_{y,\beta})\\
&=((C_{L(a,a'),\alpha+\beta})_{a,a'\in S}-P_{y}\otimes C_{y,\alpha+\beta}\\
&=N_{\alpha+\beta}-P_{y}\otimes C_{y,\alpha+\beta}\\
&=A_{\alpha+\beta,3}.
\end{align*}

Proposition~\ref{prop:in}(ii):
\begin{align*}
A_{\alpha,2}A_{\beta,2}&=(I_{q+2}\otimes C_{y,\alpha})(I_{q+2}\otimes C_{y,\beta})=q I_{q+2}\otimes C_{y,\alpha+\beta}=q A_{\alpha+\beta,1}.  \\
A_{\alpha,1}A_{\beta,2}&=q I_{q+2}\otimes C_{y,\alpha+\beta}=q A_{\alpha+\beta,2}. \\
A_{\alpha,2}A_{\beta,2}&=(P_{y}\otimes C_{y,\alpha})(P_{y}\otimes C_{y,\beta})=q I_{q+2}\otimes C_{y,\alpha+\beta}=q A_{\alpha+\beta,1}. 
\end{align*}

Proposition~\ref{prop:in}(iii): 
\begin{align*}
A_{\alpha,1}A_{\beta,3}&=(I_{q+2}\otimes C_{y,\alpha})(N_{\beta}-P_{y}\otimes C_{y,\beta})\\
&=(I_{q+2}\otimes C_{y,\alpha})N_{\beta}-P_{y}\otimes (C_{y,\alpha}C_{y,\beta})\\
&=((J_{q+2}-I_{q+2}-P_{y})\otimes J_{q^2}+qI_{q+2}\otimes C_{y,\alpha+\beta})-q P_{y}\otimes C_{y,\alpha+\beta}\\
&=(J_{q+2}-I_{q+2}-P_{y})\otimes J_{q^2}\\
&=\sum_{\gamma\in\mathbb{F}_q} A_{\gamma,3}. \\
A_{\alpha,2}A_{\beta,3}&=(P_{y}\otimes C_{y,\alpha})(N_{\beta}-P_{y}\otimes C_{y,\beta})\\
&=(P_{y}\otimes C_{y,\alpha})N_{\beta}-(P_{y}\otimes C_{y,\alpha})\otimes (P_{y}\otimes C_{y,\beta})\\
&=(J_{q+2}-I_{q+2}-P_{y})\otimes J_{q^2}+qI_{q+2}\otimes C_{y,\alpha+\beta})-qI_{q+2}\otimes C_{y,\alpha+\beta}\\
&=(J_{q+2}-I_{q+2}-P_{y})\otimes J_{q^2}\\
&=\sum_{\gamma\in\mathbb{F}_q} A_{\gamma,3}. 
\end{align*}

We use the following formula for the permutation matrices. 
\begin{align*}
\sum_{\alpha,\beta\in\mathbb{F}_q,\alpha\neq \beta} P_{\alpha}P_{\beta}&=\sum_{\alpha\in\mathbb{F}_q} P_{\alpha}(\sum_{\beta\in\mathbb{F}_q,\beta\neq \alpha}P_{\beta})\\
&=\sum_{\alpha\in\mathbb{F}_q} P_{\alpha}(J_{q+2}-I_{q+2}-P_{\alpha}-P_{y})\\
&=\sum_{\alpha\in\mathbb{F}_q} (J_{q+2}-P_{\alpha}-I_{q+2}-P_{\alpha}P_{y})\\
&=q(J_{q+2}-I_{q+2})-(\sum_{\alpha\in\mathbb{F}_q}P_{\alpha})(I_{q+2}+P_{y})\\
&=q(J_{q+2}-I_{q+2})-(J_{q+2}-I_{q+2}-P_{y})(I_{q+2}+P_{y})\\
&=(q-2)(J_{q+2}-I_{q+2})+2P_{y}.  
\end{align*}This yields
\begin{align*}
A_{\alpha,3}A_{\beta,3}&=(\sum_{\gamma\in\mathbb{F}_q} P_{\gamma} \otimes C_{\gamma,\alpha})(\sum_{\gamma'\in\mathbb{F}_q} P_{\gamma'}\otimes C_{\gamma',\beta})\\
&=\sum_{\gamma,\gamma'\in\mathbb{F}_q} P_{\gamma}P_{\gamma'} \otimes C_{\gamma,\alpha}C_{\gamma',\beta}\\
&=q\sum_{\gamma\in\mathbb{F}_q} I_{q+2} \otimes C_{\gamma,\alpha+\beta}+\sum_{\gamma,\gamma\in\mathbb{F}_q,\gamma\neq \gamma'} P_{\gamma}P_{\gamma'} \otimes J_{q^2}\\
&=qI_{q+2} \otimes (qI_q\otimes \phi(\alpha+\beta)+(J_q-I_q)\otimes J_q
)+((q-2)(J_{q+2}-I_{q+2})+2P_{y}) \otimes J_{q^2}\\
&=q^2I_{q(q+2)} \otimes \phi(\alpha+\beta)+qI_{q+2}\otimes(J_q-I_q)\otimes J_q
+((q-2)(J_{q+2}-I_{q+2})\otimes J_{q^2}+2P_{y}\otimes J_{q^2}\\
&=q^2A_{\alpha+\beta,0}+\sum_{\gamma\in\mathbb{F}_q}(q(-A_{\gamma,0}+A_{\gamma,1}+A_{\gamma,2})+(q-2)A_{\gamma,3}).
\end{align*}

\section{Krein numbers} \label{appendix:b}
We calculate the entrywise product of the primitive idempotents. 
From the equations below, we may find the Krein numbers for the symmetric association scheme by making use of  
$
E_{0,1}=E_0+E_1, 
E_{0,3}=(\frac{q}{2}+1)E_{0}+\sum_{\gamma\in\mathbb{F}_q^*}E_{\gamma,1}, 
E_{0,4}=(-\frac{q}{2}+1)E_0+2E_1+\sum_{\gamma\in\mathbb{F}_q^*}E_{\gamma,1} 
$. 
For $\alpha,\alpha'\in\mathbb{F}_q,\beta,\beta'\in\mathbb{F}_q^*$, the following are readily obtained by Theorem~\ref{thm:eigen}:
\begin{align*}
E_1\circ E_1&=\frac{1}{(q+2)q^2}(\frac{q}{2}E_0+\frac{q-2}{2}E_1),\\
E_1\circ E_{\beta,1}&=\frac{1}{(q+2)q^2}\frac{q}{2}E_{\beta,1},\\
E_1\circ E_{\alpha,2}&=\frac{1}{(q+2)q^2}\frac{q}{2}E_{\alpha,2},\\ \displaybreak[0]
E_1\circ E_{\beta,3}&=\frac{1}{(q+2)q^2}(\frac{q+2}{4}E_{\beta,3}+\frac{q-2}{4}E_{\beta,4}),\\ \displaybreak[0]
E_1\circ E_{\beta,4}&=\frac{1}{(q+2)q^2}(\frac{q-2}{4}E_{\beta,3}+\frac{q+2}{4}E_{\beta,4}),\\ \displaybreak[0]
E_{\beta,1}\circ E_{\beta',1}&=\frac{1}{(q+2)q^2}\frac{q+2}{2}E_{\beta+\beta',1},\\ \displaybreak[0]
E_{\beta,1}\circ E_{\alpha,2}&=\frac{1}{(q+2)q^2}\frac{q+2}{2}E_{\alpha+\beta,2},\\ \displaybreak[0]
E_{\alpha,2}\circ E_{\alpha',2}&=\frac{1}{(q+2)q^2}\frac{q+2}{2}E_{\alpha+\alpha',1},\\ \displaybreak[0]
E_{\beta,3}\circ E_{\beta',3}&=E_{\beta,4}\circ E_{\beta',4}=\frac{1}{(q+2)q^2}(\frac{q}{2}+1)(\frac{q+1}{2}E_{\beta+\beta',3}+\frac{q-1}{2}E_{\beta+\beta',4}),\\ \displaybreak[0]
E_{\beta,3}\circ E_{\beta',4}&=\frac{1}{(q+2)q^2}(\frac{q}{2}+1)(\frac{q-1}{2}E_{\beta+\beta',3}+\frac{q+1}{2}E_{\beta+\beta',4}),\\ \displaybreak[0]
E_{\beta,1}\circ E_{\beta',3}&=E_{\beta,1}\circ E_{\beta',4}=\frac{1}{(q+2)q^2}(\frac{q+2}{4}E_{\beta',3}+\frac{q+2}{4}E_{\beta',4}),\\ \displaybreak[0]
E_{\alpha,2}\circ E_{\beta,3}&=E_{\alpha,2}\circ E_{\beta,4}=\frac{1}{(q+2)q^2}(\frac{q+2}{4}E_{\beta,3}+\frac{q+2}{4}E_{\beta,4}).
\end{align*}

\end{document}